\newtheorem{theorem}{Theorem}[section]
\newtheorem{lemma}[theorem]{Lemma}
\newtheorem{corollary}[theorem]{Corollary}
\newtheorem{proposition}[theorem]{Proposition}
\theoremstyle{remark}
\theoremstyle{definition}
\newtheorem{definition}[theorem]{Definition}
\numberwithin{equation}{section}
\newcommand{\I}{1\!{\mathrm l}}
\DeclareMathOperator{\Cdb}{{\mathbb C}}
\DeclareMathOperator{\Tdb}{{\mathbb T}}
\begin{document}

\title[Outers for noncommutative $H^p$ revisited]{Outers for noncommutative $H^p$ revisited}

\date{\today}
\thanks{The first author was supported by a grant from the NSF.
The contributions of the second author is based upon research supported by the National Research Foundation. Any opinion, findings and conclusions or recommendations expressed in this material, are those of the authors, and therefore the NRF do not accept any liability in regard thereto.}

\author{David P. Blecher}
\address{Department of Mathematics, University of Houston, Houston, TX
77204-3008}
\email[David P. Blecher]{dblecher@math.uh.edu}
\author{Louis Labuschagne}
\address{Internal Box 209, School of Comp., Stat. \& Math. Sci., NWU, Pvt. Bag X6001, 2520 Potchefstroom, South
Africa} 
\email{louis.labuschagne@nwu.ac.za}

\begin{abstract}  We continue our study of outer elements of the noncommutative $H^p$ spaces associated with 
Arveson's subdiagonal algebras.  We extend our generalized inner-outer factorization theorem, and our 
characterization of outer  elements, 
to include the case of elements with zero determinant.  
In addition, we make  several 
further contributions to the theory of outers.  For example, we generalize the classical fact that outers in $H^p$ actually satisfy the stronger condition that there exist $a_n \in A$ with $h a_n \in {\rm Ball}(A)$ and $h a_n \to 1$ in $p$-norm.
\end{abstract}

\maketitle

\section{Introduction}

Operator theorists and operator
algebraists have found many important noncommutative analogues of the classical
`inner-outer factorization' in the Hardy spaces $H^p$.
 We recall two such classical results \cite{Hobk} of interest to us:
If $f \in L^1$
with $f \geq 0$, then $\int  \log f \,
> - \infty$ 
if and only if $f = |h|$ for an outer $h \in H^1$.
 We call this the {\em Riesz-Szeg\"{o} theorem} since it is often
attributed to one or the other of these authors.   Equivalently,
if $f \in L^1$ with  $\int  \log |f| \,
> - \infty$, then $f = uh$, where $u$ is  unimodular and $h$ is outer.   It follows easily that 
if $f \in H^1$ then $u \in H^\infty$, that is $u$ is {\em inner}.   Here $H^p$ are the classical 
Hardy spaces, of say the disk.  Secondly,
outer functions may be {\em defined} in terms of a simple equation involving
$\int  \log |f|$.   
In an earlier paper \cite{BL6}  we found versions of these theorems
appropriate to (finite maximal)
{\em subdiagonal algebras} $A$ in the sense of Arveson \cite{AIOA}, thus solving old open problems in the subject
(see the discussion in \cite[Lines 8-12, p.\ 1497]{PX} and \cite[p.\ 386]{MMS}).
Subdiagonal algebras  are defined in detail below. Suffice it for now to say that $A$ is a certain 
weak* closed subalgebra of a von Neumann algebra $M$, where $M$ is assumed to possess a {\em faithful
normal tracial state} $\tau$. (In the classical case 
where $A$ is $H^\infty$ of the disk,
$\tau$ is integration around the circle $\Tdb$, and  
$M = L^{\infty}(\Tdb)$.)    The noncommutative Hardy space 
$H^p$ is simply the closure of $A$ in 
the noncommutative $L^p$-space $L^p(M)$ associated with   $\tau$.   
The role of the expression  
$\int  \log |f|$ frequently occurring in the classical case (more specifically of the geometric mean $\exp\int  \log |f|$), is played by the {\em Fuglede-Kadison
determinant} $\Delta(f)$.  Formal definitions and background facts may be found later in this introduction (or in our survey 
\cite{BL5}); in the next few paragraphs the general reader might simply want to keep in mind the two cases of Hardy spaces of the disk,  and of the subalgebra $A$ of upper triangular $n \times n$ matrices in $M = M_n$ (with 
$\tau$ equal to the normalized trace of a matrix).

  An element $h \in H^p$ is said to be {\em outer} if $\I \in [h A]_p$, the closure 
of $hA$ in the  
$p$-norm (or in the weak* topology if $p = \infty$).  Strictly speaking, we should call these
`right outers in $H^p$', but for brevity we shorten the phrase since we will
not really consider `left outers' in this paper, 
except in Section 3 where we show that left outer is the same as right outer 
under a certain hypothesis.  If at some point we mean that $h$ is both left and right outer then we shall explicitly say that.
In \cite{BL6} we showed that left outer is the same as right outer  
for  {\em strongly outer} elements
 $h$ -- these are the outers that satisfy   $\Delta(h) > 0$.   Every outer is strongly outer if the von Neumann subalgebra ${\mathcal D} \overset{def}{=} A \cap A^*$  is
finite dimensional (as it is in the classical case),  by \cite[Theorem 4.4]{BL6}
and the remark following it.   In actuality most of the theorems 
in the relevant section of \cite{BL6}
are about strongly outer elements, and not much was said about 
outers with $\Delta(h) = 0$.  From the classical perspective on the disk this is not an issue there since in that setting  $\int \, \log |f| > - \infty$ 
is the only  interesting case.  In the present paper we prove   
variants of our Riesz-Szeg\"{o} theorem and establish a characterization of outers that 
are  valid even when $\Delta(h) = 0$.  Also, in addition to improving some other results from \cite{BL6}, we make  
further contributions to the theory of outers.  For example we generalize the classical fact (see \cite[Theorem 7.4]{Garnett}) that outers in $H^p$ actually satisfy the stronger condition that there exist $a_n \in A$ with
$h a_n \in {\rm Ball}(A)$ and $h a_n \to \I$ in $p$-norm.  We call outers satisfying this stronger condition \emph{uniform outers}; this is  an intermediate concept to strongly  outer elements and general outers.    
 In Section 3 we exhibit another hypothesis which implies many of the  very strong results hitherto only known for strongly outers \cite{BL6}.
In this case too  every outer is uniform outer.  In this same setting we also characterize outers in terms of 
limits of strongly outer elements.

There are  two factors at play which dictate the nature 
of our main characterizations
 in Section 4, which are illustrated nicely by considering the extreme case that $A$ is a 
von Neumann algebra
(which is allowable in the noncommutative $H^p$ theory, but does not occur in the classical theory).   In this case,
$h$ being outer is the same as $h$ viewed as an unbounded operator being one-to-one with dense range.
Indeed by the polar decomposition it is equivalent to     
$|h|$ being outer, 
which in the theory of noncommutative $L^p$ spaces of finite von Neumann algebras (see e.g.\ \cite{JS}), is equivalent to being invertible as an unbounded
 operator, or to the support projection $s(|h|) = \I$.  In the case $A = L^\infty([0,1])$, any strictly positive function in $L^p$ will
thus be outer.   But how does one detect when $s(|h|) = \I$?  The Fuglede-Kadison determinant is irrelevant 
here when it is zero, e.g.\ in the case $A = L^\infty([0,1])$ it cannot distinguish between the cases
$h : [0,1] \to [0,1]$ and  $h : [0,1] \to (0,1]$. 
 Here we adopt a two-fold strategy in addressing this challenge. Factor (1) in remedying this, is to work with 
a quantity that looks similar to one of the known formulae (the one in (\ref{ArvS}) below) for the Fuglede-Kadison 
determinant.  This has the effect of
making our theorems look less elegant than their strongly outer counterparts in \cite{BL6}, but this seems to be the 
price to pay for complete generality.
Factor (2): since 
the problems we are trying to address in these characterizations only occur when the von Neumann algebra ${\mathcal D}$  above is large
 or complicated, a natural remedy is to  sometimes explicitly refer to ${\mathcal D}$ in 
the statements of our main theorems, using  ${\mathcal D}$ or its projection lattice (note that 
`testing against projections' easily  detects when $s(|h|) = \I$), as a `filtration' in some sense.

To illustrate these two factors, 
we state one  variant of each of our two main theorems from Section 4.  Combining the  above two 
factors  (1) and (2), for each nonzero orthogonal projection $e \in \mathcal{D}$, we modify the known formula
 (\ref{ArvS}) below for the Fuglede-Kadison determinant to the following `local version'.   Define for each such projection $e$  
$$\delta^e(x) =  \inf \{ \Vert f (1 + a_0) \Vert^e_p
 : a_0 \in A_0 \} , \qquad x \in L^p(M), $$
where $A_0$ is the ideal ${\rm Ker}(\Phi)$ in $A$, where $\Phi : M \to {\mathcal D}$ is the canonical projection (see 
below), and $\Vert \cdot \Vert^e_p$ is the `$p$-seminorm' associated with the  
trace $\tau_e(x) = \frac{1}{\tau(e)} \tau(x)$ on $eMe$, that is,  
$\Vert x  \Vert^e_p = 
(\frac{1}{\tau(e)} \tau((ex^* xe)^\frac{p}{2}))^{\frac{1}{p}}$ (the $e$'s inside the power are unnecessary if 
$x$ is in $L^p(eMe)$, as opposed to $L^p(M)$).    One may then define a real valued function $\delta(x)$ on 
the set of nonzero orthogonal projections $e \in \mathcal{D}$ by
$\delta(x)(e) = \delta^e(x)$.    We  have:

\begin{theorem}\label{Louter0} {\rm (Generalized noncommutative Riesz-Szeg\"{o} theorem) } \   If $f\in L^p(M)$,
where  $1 \leq p < \infty$, then the following are equivalent:
\begin{enumerate}
\item [(i)] $f$ is of the form $f=uh$ for some outer $h\in H^p$ and a unitary $u\in M$.
\item [(ii)] $\delta(f) > 0$. 
\item [(iii)]  $f d \notin [fA_0]_p$ whenever $0 \neq d \in \mathcal{D}$.
\end{enumerate}
If these hold and $f \in H^p$, then necessarily $u$ is inner (a unitary lying in $A$).  On the other hand, if $f\in L^p(M)_+$ then $f = |h|$ for an outer $h\in H^p$ 
if and only if $\delta(f) > 0$. 
\end{theorem}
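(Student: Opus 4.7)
The plan is to prove the cycle $(i) \Rightarrow (iii) \Leftrightarrow (ii) \Rightarrow (i)$, with the last step (the main one) reduced to the strongly outer Riesz--Szeg\"o theorem of \cite{BL6} via an approximation argument.

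For $(ii) \Leftrightarrow (iii)$, a direct computation gives $\|f(\I+a_0)\|^e_p = \tau(e)^{-1/p}\|f(\I+a_0)e\|_p$. Since $e \in \mathcal{D} \subseteq A$ and $A_0$ is a two-sided ideal of $A$, we have $A_0 e \subseteq A_0$, from which $\delta^e(f) = 0$ iff $fe \in [fA_0]_p$. The implication $(iii) \Rightarrow (ii)$ is then immediate. Conversely, if $fd \in [fA_0]_p$ for some $0 \neq d \in \mathcal{D}$, pick a nonzero spectral projection $e_\epsilon = \chi_{[\epsilon,\infty)}(|d|) \in \mathcal{D}$ on which $|d|$ is bounded below, build $c \in \mathcal{D}$ with $de_\epsilon c = e_\epsilon$, and right-multiply to get $fe_\epsilon \in [fA_0]_p$, contradicting $\delta^{e_\epsilon}(f) > 0$.

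For $(i) \Rightarrow (iii)$, writing $f = uh$ with $u$ unitary turns the claim into $hd \notin [hA_0]_p$ for $0 \neq d \in \mathcal{D}$. Multiplicativity of $\Phi|_A$ passes to the continuous extension $\Phi_p : H^p \to L^p(\mathcal{D})$ in the form $\Phi_p(ha) = \Phi_p(h)\Phi(a)$, so $[hA_0]_p \subseteq \ker \Phi_p$ and $hd \in [hA_0]_p$ would force $\Phi_p(h)d = 0$. Outerness of $h$ gives $a_n \in A$ with $ha_n \to \I$, hence $\Phi_p(h)\Phi(a_n) \to \I$ in $L^p(\mathcal{D})$; a standard support-projection argument then shows the left (and, by finiteness of $\tau$, also the right) support of $\Phi_p(h)$ is $\I$, so $d = 0$.

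The main step is $(iii) \Rightarrow (i)$. Since $\|f(\I+a_0)\|^e_p$ depends only on $|f|$, condition (ii) for $f$ coincides with (ii) for $|f|$, and (iii) forces $s(|f|) = s(|f^*|) = \I$, so the polar decomposition $f = v|f|$ has $v$ unitary; it therefore suffices to treat the positive case $f \in L^p(M)_+$ and produce an outer $h \in H^p$ with $|h| = f$. I would approximate by $f_n = (f^2 + n^{-1}\I)^{1/2} \geq n^{-1/2}\I$, which has $\Delta(f_n) > 0$, apply the strongly outer Riesz--Szeg\"o theorem of \cite{BL6} to obtain outer $h_n \in H^p$ with $|h_n| = f_n$, and extract a weak cluster point $h$ of the uniformly bounded sequence $(h_n)$ in $L^p(M)$. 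The principal obstacle will be controlling this limit, since weak convergence does not automatically preserve the modulus or outerness; I expect to handle it by exploiting the lower semicontinuity of $\delta^e$ (as an infimum) together with $p$-norm convergence $f_n \to f$ to transfer $\delta^e(h) > 0$ to $h$, then applying the already-proved $(ii) \Rightarrow (iii)$ at $h$ together with an earlier preparatory characterization of outers in the paper to conclude $h$ is outer. The remaining claims are routine once this is done: for $f \in H^p$, multiplicativity of $\Phi_p$ applied to $u = fh^{-1}$ forces $u \in A$ (hence inner), and the $f \geq 0$ ``iff'' follows from $|uh| = |h| = f$.
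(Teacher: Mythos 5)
Your treatment of the peripheral implications is essentially sound and close to the paper's: the identity $\Vert f(\I+a_0)\Vert^e_p = \tau(e)^{-1/p}\Vert f(\I+a_0)e\Vert_p$ together with $A_0e\subseteq A_0$ gives the equivalence of (ii) with the projection case of (iii) (this is exactly how the paper deduces Theorem \ref{Louter0} from Theorem \ref{Louter}), the spectral-projection reduction from general $d$ to projections is fine, and your (i)$\Rightarrow$(iii) via $\Phi(ha)=\Phi(h)\Phi(a)$ and the full support of the outer element $\Phi(h)$ is the paper's own argument for condition (2) of Theorem \ref{Louter}.

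The gap is in the main implication (iii)$\Rightarrow$(i), and as proposed it is fatal. Having obtained outer $h_n$ with $|h_n|=(f^2+n^{-1}\I)^{1/2}$, you have no control over the phases: $h_n=v_n|h_n|$ with $v_n$ unitaries about which nothing is known, so a weak cluster point of $(h_n)$ may be $0$ or have modulus strictly smaller than $f$ (and for $p=1$ bounded sets are not relatively weakly compact, so the cluster point need not even exist). The repair you gesture at does not work: $\delta^e$, being an infimum of norm-continuous functions, is \emph{upper} semicontinuous, not lower semicontinuous, so positivity cannot be transferred to a limit whose modulus you do not control; and even granting $\delta(h)>0$, concluding that $h$ is outer is precisely the implication (iii)$\Rightarrow$(i) being proved, so the argument is circular. (Classically this step is rescued by the explicit Poisson-integral formula for the outer function of prescribed modulus, which has no noncommutative analogue --- that absence is the whole difficulty here.) The paper's route is entirely different: for $p=2$ it takes $v$ to be the orthogonal projection of $f$ onto $[fA_0]_2$, invokes \cite[Lemma 4.9]{BL6} to get $|f-v|\in L^2(\mathcal{D})$ (so that its support projection lies in $\mathcal{D}$ and hypothesis (iii) applies to it), extracts the unitary $u$ from the polar decomposition of $f-v$, and shows $u^*f\in H^2$ with $\Phi(u^*f)=|f-v|$ outer in $L^2(\mathcal{D})$, whence Lemma \ref{dlem} applies; general $p$ is then reduced to $p=2$ by passing to $|f|^{1/2}\in L^{2p}(M)$ and to $kf$ for a bounded $k=b(|f^*|^{1/2})$ with $k^{-1}\in L^{2p}(M)$ and $\Delta(k^{-1})>0$, using \cite[Lemma 4.2]{BL3} and the strongly outer Riesz--Szeg\"o theorem to reassemble $f=u_1g_1g_0$. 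A further (repairable) flaw: your claim that (iii) directly forces $s(|f|)=s(|f^*|)=\I$ is unjustified, since these supports need not be detected by projections of $\mathcal{D}$; in the paper this is obtained only \emph{after} the $L^2$ machinery has produced the factorization.
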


In the classical or more generally the antisymmetric case,  the infimum in condition (ii) of the theorem is just $\Delta(f)$ by (\ref{ArvS}), so that (ii) is simply saying that $\Delta(f) > 0$.
Hence Theorem \ref{Louter0} contains the classical Riesz-Szeg\"{o} theorem, and its
extension to antisymmetric subdiagonal algebras.      
 
Before we state our second main theorem,  we prolong our discussion of factor (2) above, 
concerning problems stemming from the von Neumann algebra ${\mathcal D}$ introduced above, which is trivial in the classical case.
 It is obvious that if $h$ is outer, then so also is $\Phi(h)$ in $L^p({\mathcal D})$ (see e.g.\ line 4 of 
\cite[p.\ 6139]{BL6}). 
  Bearing all this  
in mind, it seems reasonable to characterize $h \in H^p$ being outer in
terms of $\Phi(h)$ being outer in $L^p({\mathcal D})$ plus one other condition yet to be specified 
(by what we said in Factor (1)\ above, $\Phi(h)$ outer happens 
if and only if $\Phi(h)$ or $|\Phi(h)|$ is
invertible as an unbounded operator). 
In fact in Lemma \ref{dlem} we show that for $h \in H^p$, if $\Phi(h)$ is outer then 
$h = uk$ where $k$ is an outer in $H^p$ and $u$ is a unitary which lies in $A$. 
Thus the `other condition' mentioned a line or two above, simply needs to force
this unitary $u$ to lie in ${\mathcal D}$ (because in that case it is easy to see that 
$uk$ is outer in $H^p$).     Here is such a formulation, in which the `other condition' involves a quantity similar to one of the known expressions
for $\Delta(h)$:

\begin{theorem}\label{Houterp0}  If $h \in H^p$, where $1 \leq p < \infty$, then $h$ is outer 
if and only if 
$\Phi(h)$ is outer in $L^p({\mathcal D})$, and $\Vert \Phi(h) \Vert_p = \delta^1(h)$ in the notation just   above Theorem {\rm \ref{Louter0}}.
\end{theorem}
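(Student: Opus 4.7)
The plan is to prove the two implications separately, using Lemma \ref{dlem} for the converse and a rigidity argument based on the equality case in a trace inequality.

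For the forward implication, assume $h$ is outer, so $ha_n \to \I$ in $p$-norm for some $a_n \in A$. That $\Phi(h)$ is outer in $L^p(\mathcal{D})$ is already noted in the excerpt (apply $\Phi$ to $ha_n \to \I$ and use multiplicativity of $\Phi|_A$). The inequality $\|\Phi(h)\|_p \leq \delta^1(h)$ is immediate: $\Phi(h(\I + a_0)) = \Phi(h)$ for every $a_0 \in A_0$, and the $L^p$-extension of $\Phi$ is contractive. For the reverse estimate, decompose $h = \Phi(h) + h_0$ with $h_0 \in [A_0]_p$, pick $c_k \in A_0$ with $c_k \to h_0$ in $p$-norm, and observe that $-b_n c_k \in A_0$ because $A_0$ is an ideal in $A$. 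Then $h(\I - b_n c_k) = h - (hb_n) c_k \to h - c_k$ in $p$-norm as $n \to \infty$ (using $hb_n \to \I$ and boundedness of $c_k$), and a diagonal argument produces a sequence with $h(\I - b_{n_k} c_k) \to h - h_0 = \Phi(h)$, so $\delta^1(h) \leq \|\Phi(h)\|_p$.

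For the converse, assume $\Phi(h)$ is outer in $L^p(\mathcal{D})$ and $\|\Phi(h)\|_p = \delta^1(h)$. By Lemma \ref{dlem}, $h = uk$ with $u \in A$ unitary and $k \in H^p$ outer; it suffices to show $u \in \mathcal{D}$, since then $uk$ is visibly outer. Left-unitarity of $u$ gives $\delta^1(h) = \delta^1(k)$, and the forward direction applied to the outer element $k$ yields $\delta^1(k) = \|\Phi(k)\|_p$ with $\Phi(k)$ outer in $L^p(\mathcal{D})$. Approximating $k$ by elements of $A$ in $p$-norm, multiplicativity of $\Phi|_A$ extends to give $\Phi(h) = \Phi(u)\Phi(k)$, so in combination $\|\Phi(u)\Phi(k)\|_p = \|\Phi(k)\|_p$. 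Setting $v = \Phi(u)$, $y = \Phi(k)$, and $P = v^*v$, the Kadison--Schwarz inequality yields $P \leq \I$. A standard trace-monotonicity argument applied to $t \mapsto \tau((y^*(P + t(\I - P))y)^{p/2})$ on $[0,1]$ then shows that the equality $\|vy\|_p = \|y\|_p$ forces $y^*(\I - P)y = 0$, hence $(\I - P)^{1/2} y = 0$ in $L^p(\mathcal{D})$. Outerness of $y$ (choose $d_n \in \mathcal{D}$ with $y d_n \to \I$) then yields $\I - P = 0$, i.e., $\Phi(u)^*\Phi(u) = \I$. Writing $u = \Phi(u) + a_0$ with $a_0 \in A_0$, the identity $\I = \Phi(u^*u) = \Phi(u)^*\Phi(u) + \Phi(a_0^*a_0)$ forces $\Phi(a_0^*a_0) = 0$, and faithfulness of $\Phi$ gives $a_0 = 0$, so $u = \Phi(u) \in \mathcal{D}$.

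The main obstacle is the rigidity step: from $\|vy\|_p = \|y\|_p$ with $v \in \mathcal{D}$ a contraction and $y$ outer in $L^p(\mathcal{D})$, deducing $v^*v = \I$. The trace-monotonicity argument above works uniformly for $p \in [1,\infty)$; for $p \leq 2$ one could alternatively invoke operator monotonicity of $x \mapsto x^{p/2}$ directly, but the convexity route avoids splitting cases.
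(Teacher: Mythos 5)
Your overall architecture coincides with the paper's: Theorem \ref{Houterp0} is exactly Theorem \ref{Houterp} once one observes that $\delta^1(h)=\|P(h)\|$, the quotient norm of $h+[hA_0]_p$. Your forward direction (contractivity of $\Phi$ for one inequality, and $[hA_0]_p=[A_0]_p$ via the diagonal approximation $h(\I-a_{n_k}c_k)\to\Phi(h)$ for the other) and your converse (Lemma \ref{dlem} to write $h=uk$ with $k$ outer, reduction to the norm equality $\|\Phi(u)\Phi(k)\|_p=\|\Phi(k)\|_p$, rigidity to get $\Phi(u)^*\Phi(u)=\I$, and then $\Phi((u-\Phi(u))^*(u-\Phi(u)))=0$ to force $u=\Phi(u)\in\mathcal{D}$) are in substance the paper's argument.

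The one place you are thin is precisely the crux: the rigidity step asserting that $\|vy\|_p=\|y\|_p$ for a contraction $v$ forces $y^*(\I-v^*v)y=0$. What is really needed is: if $0\le S\le T$ are $\tau$-measurable with $\tau(S^{p/2})=\tau(T^{p/2})<\infty$, then $S=T$. Your function $F(t)=\tau\bigl((y^*(P+t(\I-P))y)^{p/2}\bigr)$ is indeed nondecreasing with $F(0)=F(1)$, but that only yields that $F$ is constant on $[0,1]$; constancy of a monotone (or even convex) scalar function does not by itself give vanishing of the positive increment operator $y^*(\I-P)y$ --- you would need a strictness statement, or to differentiate $F$ at an interior point (delicate for unbounded operators, and the convexity of $t\mapsto t^{p/2}$ that the ``convexity route'' relies on fails for $p<2$ in any case). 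For $p\le 2$ your fallback via operator monotonicity of $x\mapsto x^{p/2}$ does work: $S^{p/2}\le T^{p/2}$ and faithfulness of $\tau$ finish it. For $p>2$ the paper closes the gap in Lemma \ref{mstk}: from $S\le T$ one has $\mu_s(S)\le\mu_s(T)$ for the Fack--Kosaki generalized $s$-numbers, the trace equality forces $\mu_s(S)=\mu_s(T)$ a.e., and then operator monotonicity of the square root gives $S^{1/2}\le T^{1/2}$ with $\tau(T^{1/2}-S^{1/2})=\int_0^\infty(\mu_s(T)^{1/2}-\mu_s(S)^{1/2})\,ds=0$, whence $S=T$. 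You should either import that lemma or supply an equivalent argument; everything else in your proposal is sound.
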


We next review some of the definitions and notation underlying the 
 theory of noncommutative $H^p$-spaces. For a fuller account we refer the reader to our survey paper
\cite{BL5}.

Throughout $M$ is  a von Neumann algebra possessing a faithful
normal tracial state $\tau$.
Such von Neumann algebras are  
{\em finite} von Neumann algebras, and for such algebras
if $ab = 1$ in $M$, then $b a = 1$ also. Indeed, for any $a, b
\in M$, $ab$ will be invertible precisely when $a$ and $b$ are
separately invertible. 
In this framework the role of $H^\infty$ will be played by a (finite maximal) 
{\em subdiagonal algebra} in $M$.  The latter 
is defined to be a weak* closed unital 
subalgebra $A$ of $M$ such that if $\Phi$ is the unique trace-preserving conditional expectation from $M$ onto $A \cap A^* \overset{def}{=} {\mathcal D}$ 
guaranteed by \cite[p.\ 332]{Tak}, then:
$$ \Phi(a_1 a_2) = \Phi(a_1) \,  \Phi(a_2) , \; \; \; a_1, a_2 \in A . $$ (In the interesting {\em antisymmetric}
case, that is,  ${\mathcal D} = \Cdb 1$, we have  $\Phi(\cdot) = \tau(\cdot)\I$.)
In addition to this $A+A^*$ is required to be weak* dense in $M$.
Within the context of the first part of the definition, this last condition is 
actually equivalent 
to a whole host of alternative, but equivalent conditions 
(see e.g.\ \cite{BL5}). 

Of course $M$ itself is a maximal subdiagonal subalgebra (take $\Phi = Id$). 
The upper triangular matrices example mentioned earlier is the simplest non-commutative example of a maximal subdiagonal subalgebra; here  $\Phi$ is the 
expectation
onto the main diagonal. There are many more interesting examples incorporating 
 important constructs from the theory of operator algebras such as
 free group von Neumann algebras, the Tomita-Takesaki theory, etc.\  (see e.g.\ \cite{AIOA,Zs,MMS}).   

For a finite von Neumann algebra $M$ acting on a Hilbert space $H$, the set of possibly unbounded 
closed and densely defined operators on $H$ which are affiliated to $M$, form 
a topological $*$-algebra where the topology is the (noncommutative) topology 
of convergence in measure (see
Theorem I.28 and the example following it in \cite{Terp}). We will denote this 
algebra by $\widetilde{M}$; 
it is the closure of $M$ in the 
topology just mentioned. The trace
$\tau$ extends naturally to the positive operators in
$\widetilde{M}$. The important fact regarding this algebra, is that it is large 
enough to accommodate all the noncommutative $L^p$-spaces corresponding to 
$M$. Specifically if $1 \leq p < \infty$, then we define the space $L^p(M,\tau)$ by 
$L^p(M,\tau) =  \{a \in \widetilde{M} :  \tau(|a|^p) < \infty\}$,
where the ambient norm is given $\|\cdot\|_p = \tau(|\cdot|^p)^{1/p}$. The space 
$L^\infty(M,\tau)$ is defined to be $M$ itself. These 
spaces capture all the usual properties of $L^p$-spaces, with the dual action of $L^q$ on $L^p$ ($q$ conjugate to $p$) given by $(a,b) \to \tau(ab)$.  (See
e.g. \cite{Nel,FK,Terp,PX}).  For the sake of convenience we abbreviate $L^p(M,\tau)$ to
$L^p(M)$.
For any subset ${\mathcal S}$ of $M$, we write $[{\mathcal S}]_p$ for the
norm-closure of  ${\mathcal S}$ in $L^p(M)$, with the understanding that $[{\mathcal S}]_p$ 
will denote the weak* closure in the case $p=\infty$.  For $0 < p < \infty$ we define $H^p(M)$ and 
$H^p_0(M)$ to respectively  be $[A]_p$ and $[A_0]_p$. In the case 
$p=\infty$, we have $H^\infty(M)=A$ and $H_0^\infty(M)=A_0$. Whereas we will have 
occasion to consider $L^p$-spaces associated with various von Neumann algebras, 
the $H^p$-spaces in view in this paper will consistently be those defined above. 
For this reason we compress the notation $H^p(M)$ and $H^p_0(M)$ to $H^p$ and 
$H^p_0$ respectively.    The map $\Phi$ above extends to a contractive projection from $L^p(M)$ onto 
$L^p({\mathcal D})$, which we still write as $\Phi$.  

A crucial tool in the theory of noncommutative $H^p$ spaces 
is the {\em Fuglede-Kadison
determinant}, originally defined on $M$ by  $\Delta(a) = \exp
\tau(\log |a|)$ if $|a| > 0$, and otherwise, $\Delta(a) = \inf \,
\Delta(|a| + \epsilon \I)$, the infimum taken over all scalars
$\epsilon > 0$ (see \cite{FKa,AIOA}). The definition of this determinant 
extends to a very large class of elements of $\widetilde{M}$; large enough to 
accommodate all the $L^p$-spaces (see \cite{HS} for details). Arveson's Szeg\"o formula for elements of $L^1(M)_+$ is:
$$\Delta(h) = \inf \{ \tau(h |a+d|^2)
: a \in A_0 , d \in {\mathcal D} , \Delta(d) \geq 1 \}$$More generally for any $0<p,q<\infty$ and $h\in L^p(M)$ we have that (\cite{BL6}):
$$\Delta(h) = \inf \{ \tau(||h|^{p/q}(a+d)|^q)^{1/p}
: a \in A_0 , d \in {\mathcal D} , \Delta(d) \geq 1 \},$$
which has as a special case 
\begin{equation} \label{ArvS} \Delta(h) = \inf \{ \tau(|h (a+d)|^p)^{1/p}
: a \in A_0 , d \in {\mathcal D} , \Delta(d) \geq 1 \}. \end{equation} 

An element $h \in H^p$ is said to be {\em outer} if $\I \in [h A]_p$, or equivalently if $H^p = [h A]_p$.
By \cite[Lemma 4.1]{BL6} this is equivalent to be being in $L^p(M)$ {\em and} outer in $H^1$.
The noncommutative theory
of outers was initiated in \cite{BL3} and continued in \cite{BL6}. Subsequently the 
Riesz-Szeg\"o theorem and the theory of strongly  outer elements from the latter paper, was applied in \cite{LX} to
first prove a noncommutative version of the Helson-Szeg\"o theorem, after which this
theorem was used to give characterisation of invertible Toeplitz operators with noncommuting
symbols which is very faithful to the famous classical characterisation of Devinatz \cite{Dev}.

\section{Generalized invertibles in $H^p$} 

\begin{definition} Let $H^0$ be the closure of $A$ in the topology of convergence in 
measure. We say that $h\in H^0$ is outer in $H^0$ if and only if $hA$ is dense in $H^0$ 
with respect to the topology of convergence in measure.
\end{definition} 

Please note that the space $H^0$ as defined above is introduced for the sake of technical expediency rather than any attempt to 
identify an enveloping space of analytic elements. In fact even in the classical case of $H^p$ spaces of the disk, 
$H^0$ as defined above will be all of $\widetilde{M}$. To see this we note that by
a degenerate case of Mergelyan's theorem, $\bar{z}$ is uniformly approximable by polynomials in $z$ on the
circle minus an arc of length epsilon around 1.  So $\bar{z}$ is in the almost uniform closure
of $H^\infty(\mathbb{D})$
 (we thank Erik Lundberg for supplying this argument), and hence in $H^0(\mathbb{D})$.  So  $H^\infty(\mathbb{D}) + \overline{H^\infty(\mathbb{D})}$ is in $H^0(\mathbb{D})$. However $H^2(\mathbb{D})$ embeds continuously into $H^0(\mathbb{D})$. It is therefore obvious from the above that $L^\infty(\mathbb{T}) \subset L^2(\mathbb{T}) = H^2(\mathbb{D})+\overline{H^2(\mathbb{D})} \subset H^0(\mathbb{D})$, and hence that $\widetilde{L^\infty(\mathbb{T})} = H^0(\mathbb{D})$.

For elements of $H^p$ the concept of outerness may be regarded as a weak form of ``analytic'' invertibility. In our first two results we use the space $H^0$ to clarify this statement to some extent.

\begin{theorem} \label{isout}  $h\in H^0$ is outer in $H^0$ if and only if $h^{-1}$ exists as an element of $H^0$.
\end{theorem}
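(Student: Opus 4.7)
The plan is to prove both directions by the same basic manoeuvre: exploit the continuity of multiplication in the measure topology on $\widetilde{M}$, together with the characterisation of invertibility in $\widetilde{M}$ via support projections.

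For the forward implication, assume $h$ is outer in $H^0$, so $1 \in H^0 = [hA]_0$. Pick a sequence $a_n \in A$ with $h a_n \to 1$ in measure. The main step is to show $h$ is invertible in $\widetilde{M}$. Let $f = l(h)$ be the left support projection (the projection onto the closure of the range of $h$ viewed as a closed densely defined operator). Since $(1-f)h = 0$ we get $(1-f)(h a_n) = 0$ for every $n$, and passing to the limit in measure gives $(1-f) = 0$, i.e.\ $l(h) = 1$. Because $M$ is finite, $l(h) = 1$ forces $r(h) = 1$ also, so $h$ has a (possibly unbounded) inverse $h^{-1} \in \widetilde{M}$. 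Now apply separate continuity of left multiplication in the measure topology: $a_n = h^{-1}(h a_n) \to h^{-1} \cdot 1 = h^{-1}$ in measure. Since each $a_n \in A$, this exhibits $h^{-1}$ as an element of $[A]_0 = H^0$.

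For the converse, assume $h^{-1} \in H^0$. Choose $b_n \in A$ with $b_n \to h^{-1}$ in measure. By (separate) continuity of multiplication, $h b_n \to h h^{-1} = 1$ in measure, hence $1 \in [hA]_0$. For any $a \in A$ we then have $h b_n a \to a$ in measure, so $A \subseteq [hA]_0$. Since $[hA]_0$ is closed in the measure topology and $A$ is dense in $H^0 = [A]_0$, we conclude $H^0 \subseteq [hA]_0$, i.e.\ $h$ is outer in $H^0$.

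The step I expect to be the main obstacle is the passage from $h a_n \to 1$ in measure to the invertibility of $h$ as an element of $\widetilde{M}$: here we are working with possibly unbounded operators, so one needs to formulate the support-projection argument carefully rather than treating $h$ as a bounded element of $M$. Everything else is a routine application of separate continuity of multiplication in $\widetilde{M}$, which is standard in the Nelson/Terp framework already cited in the paper.
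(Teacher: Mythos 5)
Your proposal is correct and follows essentially the same route as the paper: establish $l(h)=\I$ from $ha_n\to\I$ in measure, use finiteness of $M$ (the paper does this explicitly via the polar decomposition $h=v|h|$) to get injectivity and hence $h^{-1}\in\widetilde{M}$, and then use continuity of multiplication in the measure topology in both directions. No gaps.
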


\begin{proof}
Firstly let $h$ be outer. Then there exists a sequence $\{a_n\}\subset H^0$ so that $ha_n\to \I$ in measure. Let $p$ be the left support projection of $h$. Then $p^\perp (ha^n)$ converges to $p^\perp$. Since by construction $p^\perp (ha^n)=0$ for all $n$, we must have  
$p^\perp=0$. So $h$ must have dense range. 

Now let $v$ be the partial isometry in the polar decomposition $h=v|h|$. We have just seen that $vv^*=p=\I$. Since $M$ is finite, this means that $v$ must be a unitary, and hence that $v^*v$.  Thus $|h|$ has dense range, and so it is 
one-to-one. Hence $h$ must be injective as well. Thus the formal inverse $h^{-1}$ of $h$ exists, and this is closed and densely defined by the basic theory of unbounded
operators, and 
it is even affiliated to $M$. Since $\{ha_n\}\subset H^0$ and since by continuity $h^{-1}(ha_n)$ will then converge in measure 
to $h^{-1}$, we have $h^{-1} \in H^0$.

Conversely, suppose $h^{-1}$ exists as an element of $H^0$. 
If $a_n \to h^{-1}$ in measure then $h a_n b \to hh^{-1} b= b$ in measure for any 
$b \in A$.  It follows that the closure in measure of $hA$ contains
$H^0$, the closure in measure of $A$.
\end{proof}

\begin{proposition}\label{isout2} Let $h\in H^p$ ($1\leq p\leq\infty$). If $1\leq p <\infty$ (respectively $p=\infty$) then $h$ is outer in $H^0$ in the sense above
if $hA$ is norm-dense in $H^p$ (resp.\  $hA$ is weak*-dense in $A$).
\end{proposition}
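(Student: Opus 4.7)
The plan is to exhibit, in each case, a sequence $a_n \in A$ with $h a_n \to \I$ in the topology of convergence in measure. Once this is done, continuity of right multiplication by any fixed $b \in A \subseteq M$ in the measure topology gives $ha_n b \to b$ in measure, so $A$ lies in the measure-closure of $hA$. Since this closure is closed in measure, $H^0 = [A]_{\rm measure} \subseteq [hA]_{\rm measure}$, which is precisely the statement that $h$ is outer in $H^0$.

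For the case $1 \leq p < \infty$, the hypothesis that $hA$ is norm-dense in $H^p$ immediately supplies a sequence $a_n \in A$ with $\|h a_n - \I\|_p \to 0$. Because $\tau$ is a finite trace, a standard Chebyshev-type estimate (applied to the distribution function of $|h a_n - \I|$) shows that $L^p$-convergence implies convergence in measure, so the reduction step above goes through.

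For the case $p = \infty$, the hypothesis is only that $\I$ lies in the weak*-closure of $hA$ in $M$, so the naive approximating net need not be bounded nor $L^p$-convergent. The clean route is to invoke \cite[Lemma 4.1]{BL6}, which is cited in the excerpt just after the displayed Szeg\"o formula: it tells us that an outer in $H^\infty$ (in the weak*-sense) is automatically outer in $H^1$. This produces a \emph{sequence} $a_n \in A$ with $h a_n \to \I$ in $L^1$-norm, hence in measure, and the reduction step finishes the proof.

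The main obstacle is exactly the $p = \infty$ case, since weak*-convergence of a net of products $h a_\alpha$ does not by itself yield anything measurable-useful. Lemma 4.1 of \cite{BL6} is what removes this obstruction by converting the weak*-hypothesis into sequential $L^1$-approximation. (As an alternative, without citing that lemma one could use that $hA$ is a convex subspace of $M$, so its weak*-closure coincides with its $\sigma$-strong closure; the seminorm $x \mapsto \tau(x^*x)^{1/2}$ is one of the seminorms defining the $\sigma$-strong topology, so $\I$ lies in the $L^2$-closure of $hA$, and $L^2$-convergence implies convergence in measure.)
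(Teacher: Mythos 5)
Your proof is correct and takes essentially the same approach as the paper: for $1\le p<\infty$ both arguments rest on the continuity of the embedding $L^p(M)\to\widetilde{M}$, the only cosmetic difference being that you approximate $\I$ and then right-multiply by $b\in A$ (using continuity of multiplication in $\widetilde{M}$), while the paper pushes the whole of $hA$ through the embedding at once. At $p=\infty$ you outsource the key step to \cite[Lemma 4.1]{BL6}, whereas the paper gives a short self-contained reduction (weak*-density in $M$ implies weak density in $L^1$, then Mazur gives norm density in $H^1$); note that your parenthetical alternative --- convexity, coincidence of the weak* and $\sigma$-strong closures, and the $\tau$-seminorm --- is essentially that same Mazur-type argument, so nothing is lost either way.
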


\begin{proof} First let $hA$ be norm-dense in $H^p$ where ($1\leq p <\infty$). Since the injection $L^p(M)\to \widetilde{M}$ is continuous, the norm density of $hA$ in $H^p$ implies that $hA$ is dense in the closure in measure of $H^p$ inside $H^0$. Since $A\subset H^p$, it is obvious that this must be all of $H^0$. 

Next suppose that $h\in A$ and that $hA$ is weak*-dense in $A$. Since the injection of $L^\infty$ into $L^1$ is continuous with respect to the weak topology on $L^1$, it is an exercise to see that then $hA$ is weakly dense in $H^1$. Since the weak and norm closures of $hA$ in $L^1$ are the same, this case therefore reduces to the one treated above. 
\end{proof}

We next present two fairly direct alternative descriptions of outerness, before proceeding with a more detailed analysis of this phenomenon. 

\begin{proposition}
Let $1\leq p \leq \infty$. Then $h\in H^p$ is outer if and only if $\Phi(h)$ is outer in $L^p(\mathcal{D})$ and $[hA_0]_p=H^p_0$.
\end{proposition}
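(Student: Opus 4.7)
The plan is to combine two standard tools: (i) the multiplicativity identity $\Phi(xa) = \Phi(x)\Phi(a)$ for $x \in H^p$ and $a \in A$, which follows from the subdiagonal property on $A$ by density in $p$-norm (and weak*-continuity of $\Phi$ and of right multiplication by bounded elements when $p = \infty$); and (ii) the direct-sum decomposition $H^p = L^p(\mathcal{D}) \oplus H^p_0$ induced by the bounded (weak*-continuous) projection $\Phi$ on $L^p(M)$, which in particular gives $H^p \cap \ker\Phi = H^p_0$.

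For the forward implication, assume $\I \in [hA]_p$ and pick a net $a_\alpha \in A$ with $h a_\alpha \to \I$. Applying $\Phi$ and using its continuity together with the multiplicativity above gives $\Phi(h) \Phi(a_\alpha) \to \I$ with $\Phi(a_\alpha) \in \mathcal{D}$, so $\Phi(h)$ is outer in $L^p(\mathcal{D})$. For the second condition, recall that $A_0$ is a right ideal of $A$ (by the subdiagonal property), so $a_\alpha b \in A_0$ for any $b \in A_0$; and since right multiplication by the bounded element $b$ is continuous in the relevant topology, $h a_\alpha b \to b$. Hence $A_0 \subseteq [hA_0]_p$, so $H^p_0 \subseteq [hA_0]_p$. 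The reverse inclusion is immediate from $\Phi(h A_0) = \Phi(h) \Phi(A_0) = 0$.

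For the reverse implication, assume $\Phi(h)$ is outer in $L^p(\mathcal{D})$ and $[hA_0]_p = H^p_0$, and pick $d_n \in \mathcal{D}$ with $\Phi(h) d_n \to \I$. The key step is the decomposition
\[
h d_n \;=\; \Phi(h) d_n + (h - \Phi(h)) d_n.
\]
The second summand lies in $H^p$ and satisfies $\Phi((h - \Phi(h)) d_n) = \Phi(h - \Phi(h)) \, d_n = 0$ by the $\mathcal{D}$-module property of $\Phi$, so by (ii) it lies in $H^p \cap \ker\Phi = H^p_0 = [hA_0]_p$. For each $n$ we may therefore choose $b_n \in A_0$ with $h b_n$ approximating $-(h - \Phi(h)) d_n$ to within $1/n$ in $p$-norm; then
\[
h(d_n + b_n) \;=\; \Phi(h) d_n + \bigl( (h - \Phi(h)) d_n + h b_n \bigr) \;\longrightarrow\; \I,
\]
so $\I \in [hA]_p$ and $h$ is outer.

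The main technicality I would expect is the $p = \infty$ case, where the sequential argument above must be translated into the language of weak*-basic neighborhoods of $\I$: given such a neighborhood specified by finitely many normal functionals and a tolerance $\varepsilon$, one first chooses an index $\alpha$ so that $\Phi(h) d_\alpha$ is within $\varepsilon/2$ of $\I$ on those functionals, and then invokes $[hA_0]_\infty = A_0$ to pick $b \in A_0$ so that $h b$ absorbs $(h - \Phi(h)) d_\alpha$ to within the remaining $\varepsilon/2$. Everything else goes through verbatim once multiplicativity of $\Phi$ and the $L^p(\mathcal{D}) \oplus H^p_0$ decomposition are in hand.
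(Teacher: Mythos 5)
Your proof is correct and follows essentially the same route as the paper's: the forward direction is exactly the argument the paper defers to the proof of its Theorem 4.4 (apply $\Phi$ to $ha_\alpha\to\I$ for outerness of $\Phi(h)$, and use $ha_\alpha b\to b$ for $b\in A_0$ to get $[hA_0]_p=H^p_0$), and the reverse direction uses the identical decomposition $h(d_n+b_n)=\Phi(h)d_n+\bigl((h-\Phi(h))d_n+hb_n\bigr)$ with $d_n\in\mathcal{D}$ and $b_n\in A_0$ chosen from $[hA_0]_p=H^p_0$. Your explicit handling of the $p=\infty$ case via weak* neighbourhoods is slightly more careful than the paper's ``the case $p=\infty$ is similar.''
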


\begin{proof} The ``only if'' part is known (see e.g.\ the proof of 
Theorem \ref{Houterp}). Hence suppose that $\Phi(h)$ is outer in $L^p(\mathcal{D})$ and $[hA_0]_p=H^p_0$. Select $\{d_n\}\subset L^p(\mathcal{D})$ so that $\Phi(h)d_n\to \I$ as $n\to\infty$. Now observe that $\{(h-\Phi(h))d_n\}\subset H^p_0$. Hence we may select $\{a_n\}\subset A_0$ so that $\|ha_n + (h-\Phi(h))d_n\|_p\to 0$ as
$n\to\infty$. But then $$\|h(d_n+a_n)-\I\|_p \leq \|\Phi(h)d_n-\I\|_p+\|ha_n + (h-\Phi(h))d_n\|_p\to 0 \quad\mbox{as}\quad n\to\infty.$$
The case $p = \infty$ is similar.   \end{proof}

\begin{proposition} \label{innam} Let $1 \leq p\leq \infty$. Then $h\in H^p$ is outer if and only if $\Phi(h)$ is outer in $L^p(\mathcal{D})$ and $\Phi(h)-h \in [hA_0]_p$.  \end{proposition}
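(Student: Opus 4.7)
The plan is to prove both directions of the equivalence, relying on the immediately preceding proposition for one implication and a direct approximation argument for the other.

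For the forward direction, I would assume $h$ is outer and verify both conditions. That $\Phi(h)$ is outer in $L^p(\mathcal{D})$ follows by selecting $a_n \in A$ with $h a_n \to \I$ in $p$-norm and applying $\Phi$: this gives $\Phi(h)\Phi(a_n) \to \I$ with $\Phi(a_n) \in \mathcal{D}$. For the second condition I would invoke the preceding proposition: outerness of $h$ already forces $[hA_0]_p = H^p_0$. Since $a - \Phi(a) \in A_0$ for every $a \in A$, the $L^p$-continuity of $\Phi$ applied along a sequence in $A$ approximating $h$ gives $h - \Phi(h) \in [A_0]_p = H^p_0$. Hence $\Phi(h) - h \in H^p_0 = [hA_0]_p$.

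For the converse, assume both conditions hold, and work first in the case $1 \leq p < \infty$. Outerness of $\Phi(h)$ in $L^p(\mathcal{D})$ supplies $d_n \in \mathcal{D}$ with $\Phi(h) d_n \to \I$ in $p$-norm. The key observation is that $(\Phi(h) - h) d_n$ still lies in $[hA_0]_p$: right multiplication by the bounded element $d_n$ is contractive on $L^p(M)$, and since $\mathcal{D} \subseteq A$ with $A_0$ a two-sided ideal of $A$, one has $h A_0 d_n \subseteq h A_0$. So for each $n$ I can choose $c_n \in A_0$ with $\|hc_n - (\Phi(h) - h) d_n\|_p < 1/n$, and setting $b_n = d_n + c_n \in A$ gives
$$\|h b_n - \I\|_p \leq \|\Phi(h) d_n - \I\|_p + \|h c_n - (\Phi(h) - h) d_n\|_p \longrightarrow 0,$$
which exhibits $\I \in [hA]_p$, so $h$ is outer.

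The case $p = \infty$ proceeds along the same lines, substituting weak* convergence and using the weak* continuity of right multiplication by a bounded element of $M$. No step presents a real obstacle: the forward direction is essentially delegated to the previous proposition, and the only mildly subtle point in the converse is recognising that the right $\mathcal{D}$-module structure $A_0 \mathcal{D} \subseteq A_0$ allows the single-vector membership $\Phi(h) - h \in [hA_0]_p$ to be upgraded to the family of memberships $(\Phi(h) - h) d_n \in [hA_0]_p$ that powers the telescoping estimate.
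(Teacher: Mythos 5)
Your proposal is correct and follows essentially the same route as the paper: the forward direction is delegated to the preceding proposition together with $\Phi(h)-h\in H^p_0$, and the converse exploits the decomposition $\Phi(h)=(\Phi(h)-h)+h$ together with the outerness of $\Phi(h)$ in $L^p(\mathcal{D})$. The paper merely phrases the converse as a one-line closure statement ($\Phi(h)\in[hA]_p$, hence $\I\in[\Phi(h)A]_p\subseteq[hA]_p$), while you unwind the same module-theoretic fact into explicit approximants.
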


\begin{proof} Since $\Phi(h)-h \in H^p_0$, the ``only if'' part follows from the previous proposition. Hence suppose that $\Phi(h)$ is outer in $L^p(\mathcal{D})$ and $\Phi(h)-h \in [hA_0]_p$.  
Since $1 \in [\Phi(h) A]_p$, it suffices to show that 
$\Phi(h) \in [hA]_p$.  However $\Phi(h) = (\Phi(h)-h) + h \in [hA]_p$.
\end{proof}

{\bf Remark.}  One may switch the condition $\Phi(h)-h  \in [hA_0]_p$ in the last result with $\Phi(h) \in [hA]_p$.

\begin{lemma} \label{gh}  Suppose that $(a_n)$ is a uniformly bounded sequence in $M$  in the operator  norm, and that 
$1 \leq p < \infty$.  The following are equivalent:
\begin{itemize}
\item [(i)] $a_n \to 1$ 
in measure, 
\item [(ii)] $a_n \to 1$  in $p$-norm. \end{itemize}   These imply
\begin{itemize} \item [(iii)] $a_n \to 1$  weak* in $M$,
 \end{itemize}  and all three are equivalent if the $a_n$'s are contractions.
\end{lemma}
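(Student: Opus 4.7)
The plan is to establish (ii) $\Leftrightarrow$ (i), then (ii) $\Rightarrow$ (iii), and finally (iii) $\Rightarrow$ (i) in the contraction case. Let $K$ denote a uniform operator-norm bound on the sequence. The implication (ii) $\Rightarrow$ (i) is immediate from the continuity of the embedding $L^p(M)\hookrightarrow\widetilde{M}$ already invoked in Proposition \ref{isout2}.

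For (i) $\Rightarrow$ (ii), first handle $p=2$. Given $\epsilon,\delta>0$, the definition of convergence in measure furnishes, for all large $n$, a projection $e_n\in M$ with $\tau(\I-e_n)<\delta$ and $\|(a_n-\I)e_n\|<\epsilon$. Using $e_n^2=e_n$ and the cyclicity of $\tau$ (so that $\tau(Xe_n)=\tau(e_nXe_n)$ for any $X$), decompose
\[
\tau(|a_n-\I|^2) = \tau\bigl(e_n|a_n-\I|^2e_n\bigr) + \tau\bigl((\I-e_n)|a_n-\I|^2(\I-e_n)\bigr).
\]
The first summand is at most $\|(a_n-\I)e_n\|^2\leq\epsilon^2$, and the operator inequality $(\I-e_n)|a_n-\I|^2(\I-e_n)\leq(K+1)^2(\I-e_n)$ bounds the second by $(K+1)^2\delta$. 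Hence $\|a_n-\I\|_2\to 0$. For general $p$: if $p\leq 2$ then $\|\cdot\|_p\leq\|\cdot\|_2$ since $\tau(\I)=1$; if $p>2$ then the operator-level inequality $|x|^p\leq\|x\|^{p-2}|x|^2$ yields $\|a_n-\I\|_p^p\leq(K+1)^{p-2}\|a_n-\I\|_2^2\to 0$.

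For (ii) $\Rightarrow$ (iii), invoke density of $M$ in $L^1(M)\cong M_*$: given $\phi\in M_*$ and $\eta>0$, pick $b\in M$ with $\|\phi-b\|_1<\eta$. Writing $q$ for the conjugate index of $p$ and using $\|b\|_q\leq\|b\|_\infty$ (as $\tau(\I)=1$), the $L^1$--$L^\infty$ and $L^p$--$L^q$ pairings yield
\[
|\tau(\phi(a_n-\I))| \leq \|\phi-b\|_1(K+1) + \|b\|_\infty\|a_n-\I\|_p,
\]
which is eventually at most $\eta(K+1)$; as $\eta$ was arbitrary, $\tau(\phi a_n)\to\tau(\phi)$.

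The main obstacle, and the only step that actually uses the contraction hypothesis, is (iii) $\Rightarrow$ (i). The trick is to expand
\[
\|a_n-\I\|_2^2 = \tau(a_n^*a_n) - \tau(a_n^*) - \tau(a_n) + 1 \leq 2 - \tau(a_n) - \tau(a_n^*),
\]
the inequality using $a_n^*a_n\leq\I$. Since $\tau\in M_*$, weak* convergence forces $\tau(a_n)\to 1$ and $\tau(a_n^*)=\overline{\tau(a_n)}\to 1$, so $\|a_n-\I\|_2\to 0$, and the already-established implication (ii) $\Rightarrow$ (i) for $p=2$ now closes the circle.
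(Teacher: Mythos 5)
Your proof is correct, and while its overall skeleton (route everything through the $2$-norm, then pass to general $p$ and to the weak* topology via the trace pairing) matches the paper's, the individual steps are argued differently. For (i)$\Leftrightarrow$(ii) the paper simply cites Fack--Kosaki \cite[Theorems 3.6 and 3.7]{FK}; you instead give a self-contained argument: the projection decomposition $\tau(|a_n-\I|^2)=\tau(e_n|a_n-\I|^2e_n)+\tau((\I-e_n)|a_n-\I|^2(\I-e_n))$ straight from the definition of convergence in measure, followed by the interpolation $\Vert x\Vert_p\leq\Vert x\Vert_2$ for $p\leq 2$ and $\Vert x\Vert_p^p\leq\Vert x\Vert_\infty^{p-2}\Vert x\Vert_2^2$ for $p>2$. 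This is more elementary and makes transparent exactly where the uniform bound $K$ enters; what the citation buys the paper is brevity and access to the full dominated-convergence version of those theorems, which it needs elsewhere (e.g.\ in Theorem \ref{fak}, where the relevant sequences are dominated in $L^p$ but not uniformly bounded in operator norm). For (iii)$\Rightarrow$(i) the paper argues by contradiction that $\Vert a_n\Vert_2\to 1$ and then invokes Hilbert-space geometry plus \cite[Theorem 3.7]{FK} again, whereas your direct squeeze $\Vert a_n-\I\Vert_2^2\leq 2-2\,\mathrm{Re}\,\tau(a_n)\to 0$ is cleaner (and is in fact the computation the paper itself uses later in the proof of Theorem \ref{fak}). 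Your density argument for (ii)$\Rightarrow$(iii), which the paper dismisses as following ``easily,'' is also fine; the only cosmetic imprecision is that the displayed bound is ``eventually at most $\eta(K+1)$ plus an arbitrarily small term,'' i.e.\ the limit superior is at most $\eta(K+1)$, which still gives the conclusion.
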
   

\begin{proof}  
If  $a_n \to 1$ 
in measure, then  $a_n \to 1$ in $p$-norm for any $p \in (0,\infty)$, by  \cite[Theorem 3.6]{FK}.
If  $a_n \to 1$ 
in $p$-norm for any $p \in (0,\infty)$, then  $a_n \to 1$ in measure, by  \cite[Theorem 3.7]{FK}.   Also,
since this is a uniformly bounded net in $M$ in the operator 
norm, it follows easily that $a_n \to 1$ weak* in $M$.   

If  $a_n \to 1$ weak* in $M$ then we claim that $a_n \to 1$ in 
measure whenever all the $a_n$'s are contractive elements of $M$.  Note that  $\tau(a_n)  \to 1$ in this
case, and $a_n \to 1$  weakly in
the $2$-norm.   We claim that $||a_n||_2 \to 1$, for if this was false there would be a subsequence
$||a_{n_k}||_2$ with limit $< 1$.  We may suppose that $||a_{n_k}||_2 \leq t < 1$.
Now $|\tau(a_{n_k})|  \leq ||a_{n_k}||_2 < t$, yielding in the limit with $k$ the contradiction $1 < t$.
Thus $a_n \to 1$ in measure by \cite[Theorem 3.7]{FK}  (iii).  
\end{proof}

We say that an element $h \in H^p$ is {\em uniform outer} in $H^p$, if there exists 
 a sequence  $a_n \in A$ such that 
$\{h a_n\}$ is a uniformly bounded sequence in $A$  in OPERATOR norm (that is in the $L^\infty$-norm on $M$), and $h a_n \to 1$ in measure
(or equivalently, by Lemma \ref{gh}, in $p$-norm).  This implies that  $h a_n \to 1$ weak*, 
and indeed it converges strongly too, and even $\sigma$-strong* (since it is uniformly bounded
and for example $\Vert (h a_n -1) x \Vert_2 \leq \Vert x \Vert \Vert h a_n -1 \Vert_2$ for any $x \in M$).  Note that in this case  
$h a_n \in H^p\cap M \subset H^1\cap M = A$ by \cite[Proposition 2]{Sai}.

\medskip

{\bf Remark.}  One may similarly define $h \in H^0$  to be {\em uniform outer in} $H^0$
if there is a sequence  $a_n \in A$ such that
$\{h a_n\}$ is a uniformly bounded sequence in $A$  in operator norm, which converges
to $1$ in measure (or equivalently, in any $p$-norm).

\medskip

In the classical case of the disk, if $f \in H^p$ it is known \cite[Theorem 7.4]{Garnett}
that being
`uniform outer' in $H^p$ is the same as being outer in $H^p$, and in fact one may ensure 
that $||f a_n||_\infty \leq 1$ above, and 
$f a_n \to 1$ a.e. We see next that  if $\Delta(h) > 0$, then $h$ is outer in $H^p$ if and only if $h$ is  uniform outer in $H^p$.

\begin{lemma} \label{ghg}  Let $1\leq p \leq \infty$.    If $h$ is uniform outer in $H^p$ then  $h$ is outer in $H^p$.
\end{lemma}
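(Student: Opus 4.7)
The plan is to observe that this is essentially an immediate consequence of Lemma \ref{gh} combined with the definitions, so the proof will be very short. The uniform outer hypothesis supplies a sequence $(a_n) \subset A$ with $(h a_n)$ uniformly bounded in operator norm and $h a_n \to \I$ in measure, and the task is to check that $\I$ lies in the appropriate closure of $hA$ for each $p$.

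First I would split into the cases $1 \le p < \infty$ and $p = \infty$. For $1 \le p < \infty$, Lemma \ref{gh} tells us that convergence in measure of the uniformly bounded sequence $(h a_n)$ is equivalent to convergence in $p$-norm, so $h a_n \to \I$ in $\|\cdot\|_p$. Since each $h a_n$ lies in $hA$, this immediately gives $\I \in [hA]_p$, which is the definition of $h$ being outer in $H^p$.

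For $p = \infty$, I would use Lemma \ref{gh}(iii): a uniformly bounded sequence converging to $\I$ in measure also converges to $\I$ weak* in $M$. As remarked in the paragraph defining uniform outer, each $h a_n$ in fact lies in $A$ (using \cite[Proposition 2]{Sai}), so $\I$ lies in the weak* closure of $hA$ in $A$, which is $[hA]_\infty$ by convention. Hence $h$ is outer in $H^\infty$.

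There is no serious obstacle here; the content of the lemma is really just bookkeeping between the convergence modes handled in Lemma \ref{gh}. The only mild subtlety worth flagging is that in the $p = \infty$ case one must cite the Saito result to know that $h a_n \in A$, rather than merely in $H^\infty \cap M$, so that the weak* closure being taken matches the definition of outer in $H^\infty$.
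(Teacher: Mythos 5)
Your proof is correct and follows the same route as the paper's (which simply notes that $h a_n \to \I$ in $p$-norm and weak* and concludes the result is obvious); you have merely spelled out the case split and the appeal to Lemma \ref{gh} that the paper leaves implicit.
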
   \begin{proof}   We know that $h a_n \to 1$ in $p$-norm, and weak*.  So this  is obvious.   
\end{proof}

\begin{theorem} \label{fak}  Let $1\leq p \leq \infty$.   Suppose that 
$h$ is outer in $H^p$ and $\Delta(h) \neq 0$ (the latter is automatic if $\mathcal{D}$ 
is finite-dimensional).  Then $h$ is uniform outer in $H^p$.
Indeed there exists outers  $a_n \in A$ with 
$\|h a_n \|_\infty \leq 1$ (this
is the operator norm), $a_n^{-1} \in H^p$, $\{\Vert  a_n^{-1}  \Vert_p\}$ bounded, 
$a_n^{-1} \to h$ in $p$-norm if $p < \infty$ (weak* if $p = \infty$), and
$h a_n \to 1$ weak* (or equivalently in $p$-norm, or in measure).
\end{theorem}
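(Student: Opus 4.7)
The plan is to construct $a_n$ as the inverse of a Riesz--Szeg\"o regularization of $h$. For each $n$, apply the noncommutative Riesz--Szeg\"o theorem to $|h|^2 + n^{-1}\I \in M_+$ (which has $\Delta \geq n^{-1/2} > 0$) to produce an outer $k_n \in H^p$ with $k_n^* k_n = |h|^2 + n^{-1}\I$. Since $k_n^* k_n \geq n^{-1}\I$, the operator $a_n := k_n^{-1}$ exists in $M$ with $\|a_n\|_\infty \leq n^{1/2}$.

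To verify the structural assertions, approximate $k_n \in [A]_p$ by a sequence $c_m \in A$ in $p$-norm and multiply on the left by the bounded element $a_n$: this simultaneously gives $c_m \to a_n$ in $p$-norm (so $a_n \in [A]_p \cap M = A$ by \cite[Proposition 2]{Sai}) and $a_n c_m \to \I$ in $p$-norm (so $a_n$ is outer). Writing $h = u|h|$ for the polar decomposition, with $u$ unitary in $M$ since outerness of $h$ forces $h$ to be injective with dense range, we have
\[
(h a_n)(h a_n)^* \;=\; h(k_n^*k_n)^{-1}h^* \;=\; u\,\varphi_n(|h|)\,u^*, \qquad \varphi_n(t)=\tfrac{t^2}{t^2+n^{-1}}\in[0,1],
\]
whence $\|h a_n\|_\infty \leq 1$. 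The remaining quantitative bound $\|a_n^{-1}\|_p^p = \|k_n\|_p^p = \tau((|h|^2 + n^{-1})^{p/2})$ decreases monotonically to $\|h\|_p^p$ by dominated convergence, establishing both boundedness and the candidate limit.

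The main obstacle is the convergence $a_n^{-1} \to h$ in $p$-norm and $h a_n \to \I$ weak* (the latter equivalent to $p$-norm and in-measure convergence by Lemma \ref{gh}, since $\{h a_n\}$ is uniformly bounded). The key inputs are the identities
\[
\I - (h a_n)(h a_n)^* \;=\; u(\I - \varphi_n(|h|)) u^*, \qquad \I - (h a_n)^* (h a_n) \;=\; n^{-1} a_n^* a_n,
\]
both of whose $L^1$-norms tend to $0$: for the first, $\tau(\I - \varphi_n(|h|)) = n^{-1}\tau((|h|^2+n^{-1})^{-1}) \to 0$ by dominated convergence on the spectrum of $|h|$, using $s(|h|) = \I$ from outerness; for the second, $\tau(n^{-1}a_n^*a_n) = n^{-1}\tau((|h|^2 + n^{-1})^{-1})$ by trace-cyclicity. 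The subtlety is that Riesz--Szeg\"o determines $k_n$ only up to left multiplication by a unitary in $\mathcal{D}$, so the polar part $u_n$ of $k_n = u_n(|h|^2+n^{-1})^{1/2}$ need not match $u$; one must exploit this $\mathcal{D}$-unitary freedom to align $u_n$ with $u$ so that $h a_n = u\,\psi_n(|h|)\,u_n^*$---with $\psi_n(t) = t/(t^2+n^{-1})^{1/2} \nearrow 1$ on the spectrum of $|h|$---actually converges to $\I$ rather than to some other unitary with the same modulus. Once this alignment is secured, the convergence $k_n \to h$ in $p$-norm follows from $\|k_n\|_p \to \|h\|_p$ combined with the matched polar parts, and the promised properties of $a_n$ are all in hand.
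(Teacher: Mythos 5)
Your construction of $a_n = k_n^{-1}$ with $|k_n| = (|h|^2 + n^{-1}\I)^{1/2}$ is a workable regularization, essentially parallel to the paper's choice $k_n = b_n(|h|)$ with $b_n(t) = \min(1/t,n)$, and the verifications that $a_n$ is an outer element of $A$, that $\|h a_n\|_\infty \le 1$, and that $\{\|a_n^{-1}\|_p\}$ is bounded are all sound (modulo the small slip that $|h|^2+n^{-1}\I$ lives in $L^{p/2}(M)_+$, not $M_+$, so Riesz--Szeg\"o should be applied to its square root in $L^p(M)_+$). The genuine gap is exactly at the point you flag as a ``subtlety'': you never prove that $h a_n \to \I$. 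Your two $L^1$ identities show only that $|ha_n| \to \I$ and $|(ha_n)^*| \to \I$, which is perfectly consistent with $ha_n$ converging to an arbitrary unitary $u u_n^*$. The proposed remedy --- use the $\mathcal{D}$-unitary freedom in the Riesz--Szeg\"o factorization to ``align'' $u_n$ with $u$ --- cannot be executed as stated: that freedom replaces $u_n$ by $w u_n$ with $w$ a unitary of $\mathcal{D}$, whereas the mismatch $u u_n^*$ is a general unitary of $M$ with no reason to lie in (or near) $\mathcal{D}$. Proving that the mismatch is asymptotically correctable by $\mathcal{D}$-unitaries is essentially equivalent to the convergence you are trying to establish, so the argument is circular at its crux.

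The paper closes precisely this gap with a determinant/trace squeeze that requires no alignment of polar parts: after adjusting $a_n$ by a unitary $v_n \in \mathcal{D}$ so that $\Phi(h a_n) \ge 0$ (which is achievable since $\Phi(ha_n)\in\mathcal{D}$ and does not change $|a_n^*|$), one shows $\Delta(h a_n) = \Delta(|h|\,|a_n^*|) \to 1$ by a direct monotone-convergence computation with the spectral distribution measure of $|h|$ --- this is where the hypothesis $\Delta(h) > 0$ is genuinely used, to provide an integrable minorant --- and then combines it with $\Delta(ha_n) = \Delta(\Phi(h)\Phi(a_n)) \le \tau(\Phi(ha_n)) = \tau(h a_n) \le \|ha_n\|_\infty \le 1$ to force $\tau(h a_n) \to 1$, whence $\|\I - ha_n\|_2^2 \le 2(1-\mathrm{Re}\,\tau(ha_n)) \to 0$ and Lemma \ref{gh} upgrades the mode of convergence. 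Some such mechanism is indispensable in your proof; your own quantities would accommodate it, since $\Delta(ha_n) = \Delta(|h|(|h|^2+n^{-1}\I)^{-1/2}) \to 1$ by the same computation. Note also that $a_n^{-1}\to h$ in $p$-norm does not follow merely from $\|k_n\|_p \to \|h\|_p$ ``combined with matched polar parts''; the paper derives it from $ha_n\to\I$ in measure together with H\"older and the domination $|a_n^{-1}| \le |h|+\I$ via \cite[Theorem 3.6]{FK}, an argument that transfers verbatim to your $k_n$ once the convergence of $ha_n$ is in hand.
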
  \begin{proof}  
 We follow the idea in the proof of \cite[Theorem 7.4]{Garnett},
but we will need several other ingredients, such as the functional calculus for unbounded
positive operators \cite[Theorem 5.6.26]{KR}, and some facts about the Fuglede-Kadison determinant of a positive unbounded operator that may be found summarized in for example the first pages 
of \cite{HS}.  In particular we use the spectral 
distribution measure $\mu_{|h|}$ on $[0,\infty)$ for $|h|$, namely
$\mu_{|h|}(B) = \tau(E_{|h|}(B))$ for Borel sets $B$, where 
$E_{|h|}$ is the projection valued spectral measure for $|h|$
 (see e.g.\ \cite{HS}).  
We have $\Delta(h) = \Delta(|h|) =
\exp \int_0^\infty \, \log \, t \, d \mu_{|h|}(t)$, and this is
strictly positive by hypothesis.          
Since $$\int_0^\infty \, \chi_B  \, d \mu_{|h|}(t) =
\mu_{|h|}(B) = \tau(E_{|h|}(B)) = \tau(\chi_B(|h|))$$ for Borel sets $B$, we have
\begin{equation}\label{specdis}
\int_0^\infty \, f \, d \mu_{|h|}(t) =
\tau(f(|h|))
\end{equation}
for simple Borel functions $f$ on $[0,\infty)$. Hence by
the functional calculus and Lebesgue's monotone convergence theorem,
the last equation also holds for every bounded
real-valued Borel function $f$ on $[0,\infty)$, since there
is an increasing sequence of simple Borel functions converging to $f$.  

Let $b_n(t) = 1/t$ if              
 $t > 1/n$, and  $b_n(t) = n$ if $0 \leq t \leq 1/n$.  Let $c_n(t) = t b_n(t)$.
  Then $c_n(t) = 1$ if $t > 1/n$, and otherwise equals
$t n$, which is majorised by 1.   
Let $k_n = b_n(|h|)$.  Then $k_n \in M$ by
the Borel functional calculus for unbounded operators.  

Now consider the operator $\frac{1}{b_n}(|h|)$. By the Borel functional calculus we then have that 
$$k_n\frac{1}{b_n}(|h|)=\mathrm{supp}(|h|).$$Since the injectivity of $|h|$ (see Theorem \ref{isout2}) combined 
with the Borel functional calculus for affiliated operators ensures that the spectral projection $E_{|h|}(\{ 0 \}) = 0$, it actually 
follows that $\frac{1}{b_n}(|h|)k_n=\I$ and hence that $k_n^{-1}=\frac{1}{b_n}(|h|)$ as affiliated operators. Since $\frac{1}{b_n}(t) \leq 1+t$, the Borel functional calculus also ensures that $$k_n^{-1} = \frac{1}{b_n}(|h|)\leq 
\frac{1}{n} \I+  |h|  \leq \I+ |h|.$$In view of the fact that 
$|h|\in L^{p}(M)$, this in turn ensures that $k_n^{-1}\in L^{p}(M)$. But then the determinant is well-defined for both $k_n$ and 
$k_n^{-1}$ with $1=\Delta(\I)=\Delta(k_n).\Delta(k_n^{-1})$. This can only be the case if $\Delta(k_n)>0$.

So by the noncommutative
Riesz-Sz\"ego theorem \cite{BL6} (applied to $A^*$), there exists a unitary 
$u_n \in M$ and an outer $a_n \in A$ with $k_n = a_n u_n = |(a_n)^*|$.
By the functional calculus,   
$$\Vert h k_n \Vert = \Vert |h| k_n \Vert =  \Vert c_n(|h|) \Vert \leq 1, $$ 
and so $\Vert h a_n \Vert \leq 1$.  Note that  
$ha_n$ is in $H^p \cap M = A$, and is outer in $H^p$. Hence it is 
outer in $A$.  

We show that $a_n^{-1}\in H^p$. Since $a_n$ is outer in $A$, we may select a
 net $\{g_m\}\subset A$ 
so that for a fixed $n$, $a_n g_m \to 1$ weak*. If $p = \infty$
then $k_n \in M^{-1}$, hence $a_n  \in M^{-1}$. Therefore if in this case we left-multiply the net  
$\{a_ng_m\}_m$ by $a_n^{-1}$, we see that $g_m  \to a_n^{-1}$ weak*, so that $a_n^{-1} \in A$.  For other values of $p$, the fact that $k_n^{-1}\in L^p(M)$ combined with the equality $|a_n^{-1}|=|a_n^*|^{-1}=k_n^{-1}$, ensures that $a_n^{-1}\in L^p(M)$. Since $a_n g_m \to 1$ weak*, left-multiplying by 
$a_n^{-1}\in L^p(M)$ shows that 
$g_m \to a_n^{-1}$ in the weak topology of $L^p(M)$. To see this note that if $r \in L^q(M)$ ($q$ conjugate to $p$), then
$$\tau(g_m r) = \tau(a_n g_m (r a_n^{-1})) \to \tau(r a_n^{-1}) = 
\tau(a_n^{-1} r)$$
since $r a_n^{-1} \in L^1(M)$.  Hence by Mazur's theorem, $a_n^{-1}$
is in the norm closure of $A$ in $L^p(M)$.  In other words $a_n^{-1} \in H^p$.  

We may assume that $\Phi(ha_n) \geq 0$ for each $n$, by replacing each $a_n$ with $a_n v_n^*$ for a unitary $v_n$ in ${\mathcal D}$ with $\Phi(ha_n) v_n^* = \Phi(ha_n v_n^*)\geq 0$.  Note that we then still have that $|(a_n v_n^*)^*| = |a_n^*|$.  

To show that $h a_n \to \I$, we first prove  that $\Delta(h a_n) \to 1$.
  Note that $c_n \to 1$ pointwise on $(0,\infty)$, 
and so by the Borel functional calculus \cite[Theorem 5.6.26]{KR}
 $|h| k_n$ converges WOT to the spectral projection for $(0,\infty)$ of $|h|$. 
But as was noted earlier, the latter equals $\I$. 
We have $$\Delta(h a_n) = \Delta(|h| |(a_n)^*|)   
=  \Delta(|h| k_n) = \Delta(c_n(|h|)).$$
Let $\mu_{|h|}$ be as above. Then $$\int_0^\infty \, \log \, c_n(t) \, d \mu_{|h|}(t)
= \int_0^{1/n}  \, \log \, (tn)  \, d \mu_{|h|}(t),$$
and similarly $\int_0^\infty \, |\log \, c_n(t)| \, d \mu_{|h|}(t)
= -
\int_0^{1/n}  \, \log \, (tn)  \, d \mu_{|h|}(t)$ which in turn equals
$$ 
- \int_0^{1/n}  \,\left( \log \, t\, d \mu_{|h|}(t) - (\log n)\right) \,  \mu_{|h|}([0,\frac{1}{n}]) \leq - \int_0^{1/n}  \, \log \, t\, d \mu_{|h|}(t),$$
These are clearly uniformly bounded in $n$ by $- \int_0^{1}  \, \log \, t\, d \mu_{|h|}(t)$, which is finite since $\Delta(|h|) > 0$. 
By equation (\ref{specdis})
 above,  $\int_0^\infty \, f \, d \mu_{|h|}(t) =
\tau(f(|h|))$ for $f(t) = \log (c_n(t) + \epsilon)$, since there 
is an increasing sequence of Borel simple functions converging to $f$.
Thus $$\Delta(c_n(|h|)
+ \epsilon 1) = \exp \int_0^\infty \, \log \, (c_n(t) + \epsilon) \, d \mu_{|h|}(t).$$
Taking the limit as $\epsilon \to 0^+$ we have 
$$\Delta(h a_n) = \Delta(c_n(|h|)) = \exp \int_0^\infty \, \log \, c_n(t) \, d \mu_{|h|}(t) 
\to e^0 = 1$$ 
as $n \to \infty$, by Lebesgue's monotone convergence theorem again.

Next notice that $$\Delta(h a_n) = \Delta(\Phi(h) \Phi(a_n)) \leq
\tau(\Phi(h)\Phi(a_n)),$$
since $\Delta \leq \tau$ on positive elements \cite[Lemma 4.3.6]{AIOA}.  Thus 
$$\Delta(h a_n) \leq \tau(\Phi(h) \Phi(a_n)) = \tau(h a_n) \leq 1,$$ since $\|ha_n\|_\infty \leq 1$.  By squeezing, $\tau(h a_n)  \to 1$.
Then  $$\Vert 1 - h a_n \Vert_2^2 =  1 + \Vert  h a_n \Vert_2^2
-2 {\rm  Re}(\tau(h a_n)) \leq 2 (1- {\rm  Re}(\tau(h a_n))) \to 0 .$$
The equivalent modes of convergence now follow by Lemma \ref{gh}.

If $p < \infty$ then for any $r< p$ we have 
$$\Vert a_n^{-1} - h \Vert_r = \Vert (h a_n - 1) a_n^{-1}  \Vert_r \leq \Vert h a_n - 1 \Vert_s \,
\Vert a_n^{-1} \Vert_p ,$$
where $\frac{1}{p} + \frac{1}{s} = \frac{1}{r}$.  By 
\cite[Theorem 3.6]{FK}  we have $\Vert h a_n - 1 \Vert_s \to 0$, giving 
$\Vert a_n^{-1} - h \Vert_r \to 0$.  Thus $a_n^{-1} - h \to 0$ in measure 
by the equivalence of (i) and (ii) of \cite[Theorem 3.7]{FK}. Since $|a_n^{-1}| = k_n^{-1}  \leq  |h|  +1$, 
we deduce that $\Vert a_n^{-1} - h \Vert_p \to 0$ by  \cite[Theorem 3.6]{FK}.  

In the case $p = \infty$, we have that $\{a_n^{-1}\}$ is bounded in operator norm.
Then $$a_n^{-1} = 
(1 - h a_n) a_n^{-1} + h \to h$$ 
in the WOT (since the product of a sequence that converges
$\sigma$-strong* to $0$, with a bounded sequence, certainly converges to $0$
in WOT).    Thus $a_n^{-1} \to h$ weak*.   \end{proof}

{\bf Remark.}  In the proof of the above theorem the sequence $\{|h|-k_n^{-1}\}$ actually converges uniformly to 0, 
by the functional calculus applied to 
$t - b_n^{-1}(t)$.   In view of this it is
very tempting to try and modify the  construction above to show that $h-a_n^{-1}$ also converges
uniformly to 0.   
Certainly we have $\Vert a_n^{-1} \Vert \to \Vert h \Vert$ in the case $p = \infty$ (these are operator norms).   
Possibly it is also the case that every uniform outer is the norm limit of a sequence of strongly
outer elements. 

\medskip
 
  Note that in the case that $A = M = D$, uniform outer
is also the same as being outer:

\begin{corollary} \label{bbb}   Let $1\leq p \leq \infty$.  If $h \in L^p(M)$ 
and $h$ is outer (that is, $[h M]_p = L^p(M)$), then $h$ is uniform outer
in $L^p(M)$.  \end{corollary}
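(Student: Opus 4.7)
The plan is to follow the strategy of Theorem \ref{fak}, observing that in the degenerate setting $A = M = \mathcal{D}$ the construction simplifies substantially: the polar decomposition in $M$ itself plays the role that the Riesz--Szeg\"o factorization played in that proof, so no appeal to the Fuglede--Kadison determinant is required.

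First I would exploit outerness to force the partial isometry in the polar decomposition to be a unitary. Write $h = v|h|$ with $v \in M$ a partial isometry and let $e \in M$ be the left support projection of $h$. Pick $g_n \in M$ with $h g_n \to \I$ in $p$-norm (or weak* if $p = \infty$). Since $e^\perp h g_n = 0$ for every $n$ and left multiplication by the fixed bounded operator $e^\perp$ is continuous in measure (and in the weak* topology), passing to the limit gives $e^\perp = 0$. Thus $vv^* = \I$, and since $M$ is finite, $v$ is unitary. In particular $v^* v = \I$, so $|h| = v^* h$ has right support $\I$; equivalently $E_{|h|}(\{0\}) = 0$.

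Next I would recycle the bounded functional calculus construction from the proof of Theorem \ref{fak}: set $b_n(t) = 1/t$ for $t > 1/n$ and $b_n(t) = n$ for $0 \le t \le 1/n$, let $c_n(t) = t b_n(t)$, and define $a_n = b_n(|h|) v^*$. Since $b_n$ is a bounded Borel function, $a_n \in M = A$, and the Borel functional calculus for unbounded operators gives
$$h a_n \; = \; v |h| b_n(|h|) v^* \; = \; v \, c_n(|h|) \, v^*.$$
Because $0 \le c_n(t) \le 1$, this furnishes the uniform operator-norm bound $\|h a_n\|_\infty \le 1$.

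Finally I would check $h a_n \to \I$ in the required sense. Since $E_{|h|}(\{0\}) = 0$ and $c_n \to 1$ pointwise on $(0,\infty)$ with $|c_n(t) - 1| \le 1$, dominated convergence against the spectral distribution measure $\mu_{|h|}$ yields
$$\|c_n(|h|) - \I\|_2^2 \; = \; \int_0^\infty (c_n(t) - 1)^2 \, d\mu_{|h|}(t) \; \longrightarrow \; 0.$$
Combined with $\|c_n(|h|)\|_\infty \le 1$, Lemma \ref{gh} yields convergence of $c_n(|h|)$ to $\I$ in measure and in every $p$-norm (and, since the $h a_n$ are contractions, weak* as well, which suffices when $p = \infty$). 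As left and right multiplication by the unitary $v$ are isometric on each $L^p(M)$, it follows that $h a_n = v c_n(|h|) v^* \to \I$, so $h$ is uniform outer. I do not foresee a substantive obstacle; the only step meriting care is the weak* version of the support argument when $p = \infty$, which is handled above by the weak*-continuity of multiplication by the fixed bounded operator $e^\perp$.
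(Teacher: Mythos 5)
Your proposal is correct and follows essentially the same route as the paper's own proof: the paper likewise takes the bounded functional calculus elements $k_n=b_n(|h|)$ from the proof of Theorem \ref{fak} and uses the unitary $u$ from the polar decomposition to form $h(k_nu^*)=u|h|k_nu^*\to uu^*=\I$ with $\|h(k_nu^*)\|\le 1$. Your only (harmless) variation is to verify the convergence $c_n(|h|)\to\I$ directly by dominated convergence in the $2$-norm rather than by quoting the WOT/weak* convergence from the earlier proof.
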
   \begin{proof}  Following the previous proof we obtain $k_n \in M$ with $|h| k_n \to 1$ weak*, etc.  If $u$ is the unitary in the polar decomposition of $h$ then 
$h (k_n u^*) = u |h| k_n u^* \to u u^* = 1$.  Clearly $\Vert h (k_n u^*) \Vert \leq 1$ for each $n$.  \end{proof}


\begin{corollary}
Let $1\leq p\leq q\leq \infty$ with $h\in H^q$. Then $h$ is uniform outer in $H^q$ if and only if $h$ is uniform outer in $H^p$.
\end{corollary}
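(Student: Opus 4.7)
The plan is to observe that the defining conditions for uniform outer in $H^p$ --- namely, existence of $(a_n) \subset A$ with $\{ha_n\}$ uniformly bounded in the operator norm of $M$ and $ha_n \to 1$ in measure --- are themselves $p$-independent. The only $p$-dependent ingredient is the background requirement that $h \in H^p$; but since $\tau$ is a finite trace we have $H^q \subset H^p$ whenever $p \leq q$, and so the hypothesis $h \in H^q$ automatically supplies $h \in H^p$. Thus the same witnessing sequence $(a_n)$ ought to serve in both directions, and the corollary should drop out with no real work.

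Concretely, I would argue as follows. For the forward implication, suppose $h$ is uniform outer in $H^q$ via $(a_n) \subset A$. The inclusion $H^q \subset H^p$ gives $h \in H^p$, and the two conditions on $(ha_n)$ are intrinsic (they do not mention $p$ or $q$), so $(a_n)$ also exhibits $h$ as uniform outer in $H^p$. If one prefers the $p$-norm formulation, Lemma \ref{gh} converts measure convergence of the bounded sequence $ha_n$ into $p$-norm convergence. For the converse, suppose $h$ is uniform outer in $H^p$ via $(a_n)$; we are also given $h \in H^q$. The same $(a_n)$ still has $\{ha_n\}$ bounded in operator norm and $ha_n \to 1$ in measure, so by Lemma \ref{gh} (when $q < \infty$) one has $ha_n \to 1$ in $q$-norm as well, yielding uniform outerness in $H^q$. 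When $q = \infty$, measure convergence together with operator-norm boundedness is precisely what the definition asks for in $H^\infty$, so the conclusion is immediate.

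There is no real obstacle here; the point of the corollary is simply to record that uniform outerness is a $p$-independent notion, contingent only on $h$ lying in the ambient $H^p$-space.
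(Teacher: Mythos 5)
Your proof is correct and is essentially the paper's own argument: both reduce the statement to the observation that the defining conditions (operator-norm boundedness of $\{ha_n\}$ and convergence $ha_n \to 1$ in measure) do not depend on $p$, with Lemma \ref{gh} supplying the equivalence between measure convergence and $p$- or $q$-norm convergence for the bounded sequence $\{ha_n\}$. Your explicit remark that $H^q \subset H^p$ handles the ambient-space issue the paper leaves implicit, but the route is the same.
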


\begin{proof} Immediate from Lemma \ref{gh}, since $ha_n \to 1$ in $p$ or $q$
norm if and only if $ha_n \to 1$ in measure.
\end{proof}

\begin{proposition} \label{onetwo}  Let $1\leq p, q, r \leq \infty$ be given with  $\frac{1}{p}=\frac{1}{q}+\frac{1}{r}$ and $h\in H^p$ with $h=h_1h_2$ where $h_1\in H^q$ and $h_2 \in H^r$.  Then $h$ is right outer in $H^p$ if  both $h_1$ and $h_2$ are right outer in
$H^q$ and $H^r$ respectively.  Conversely, if 
$h$ is right outer (resp. left outer) in $H^p$ then $h_1$ is right outer in $H^q$ (resp. $h_2$ is left outer in $H^r$). \end{proposition}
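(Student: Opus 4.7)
The plan for the forward direction is to exploit the fact that $[hA]_p$ is a closed right $A$-submodule of $L^p(M)$, since right-multiplication by any $a \in A$ is $p$-norm continuous (by Hölder with $\infty$) and preserves $hA$. First I would use right outerness of $h_2$ in $H^r$ to pick $b_n \in A$ with $h_2 b_n \to \I$ in $r$-norm, and then apply Hölder with $\tfrac{1}{p} = \tfrac{1}{q} + \tfrac{1}{r}$ to obtain
\[
\|h b_n - h_1\|_p \leq \|h_1\|_q \, \|h_2 b_n - \I\|_r \to 0,
\]
so $h_1 \in [hA]_p$. Then pick $a_n \in A$ with $h_1 a_n \to \I$ in $q$-norm (using right outerness of $h_1$); since $\tau$ is a state and $p \leq q$, this convergence also holds in $p$-norm. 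By the module property, $h_1 a_n \in [hA]_p$ for each $n$, hence $\I \in [hA]_p$.

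For the converse (right outer case), the natural strategy runs into the obstacle that the obvious approximation scheme only yields $p$-norm convergence, whereas outerness of $h_1$ in $H^q$ a priori requires $q$-norm convergence — and $q$-norm is stronger than $p$-norm here. I would bypass this by using the fact (cited from \cite[Lemma 4.1]{BL6} in the introduction) that outerness in $H^s$ is equivalent to being in $L^s$ and outer in $H^1$. So it suffices to prove $h_1$ is right outer in $H^1$, since $h_1 \in L^q$ is given. Starting from $a_n \in A$ with $h a_n \to \I$ in $1$-norm (in fact in $p$-norm, which dominates), I would approximate $h_2$ in $r$-norm by $\gamma_n \in A$ chosen so that, by Hölder,
\[
\|h_1 \gamma_n a_n - h a_n\|_p \;\leq\; \|h_1\|_q \, \|\gamma_n - h_2\|_r \, \|a_n\|_\infty \;<\; 1/n .
\]
Setting $c_n = \gamma_n a_n \in A$ and passing to $1$-norm (which $p$-norm dominates), the triangle inequality gives $h_1 c_n \to \I$ in $1$-norm, so $h_1$ is right outer in $H^1$, and hence in $H^q$.

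The left-outer converse follows by the mirror argument: starting from $a_n \in A$ with $a_n h \to \I$ in $p$-norm, write $a_n h = (a_n h_1) h_2$ with $a_n h_1 \in H^q$, approximate $h_1$ in $q$-norm by $\delta_n \in A$, and use the Hölder estimate $\|a_n \delta_n h_2 - a_n h_1 h_2\|_p \leq \|a_n\|_\infty \|\delta_n - h_1\|_q \|h_2\|_r$ to push through a diagonal argument showing $c_n h_2 \to \I$ in $1$-norm for $c_n = a_n \delta_n \in A$. Then the left-outer analogue of \cite[Lemma 4.1]{BL6} (immediate by passing to the opposite subdiagonal algebra $A^*$ in $M$, or already available from that reference) upgrades this to left outerness of $h_2$ in $H^r$.

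The main obstacle, as noted, is the norm mismatch in the converse. The trick that unblocks it is the reduction to the $H^1$ level via Lemma 4.1 together with the trivial but essential observation that in our probability-state setting $p$-norm convergence implies $1$-norm convergence; all Hölder estimates in the argument then land in a norm at least as strong as the $1$-norm we actually need.
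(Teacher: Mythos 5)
Your proposal is correct and follows essentially the same route as the paper: the forward direction is the same module/H\"older argument (you show $h_1\in[hA]_p$ first and then multiply by $a_n$, whereas the paper approximates $h_1a_n$ directly by $hb_m$, a cosmetic difference), and your converse is the same perturbation estimate $\|h_1(\gamma-h_2)a\|_p\leq\|h_1\|_q\|\gamma-h_2\|_r\|a\|_\infty$ followed by the upgrade from outer in $H^1$ (or $H^p$) to outer in $H^q$ via \cite[Lemma 4.1]{BL6}, which the paper also invokes (implicitly) in its final sentence ``but then $h_1$ is also right outer in $H^q$.'' No further comment is needed.
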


\begin{proof}The first part follows by direct computation. Specifically given a sequence $\{a_n\}\subset A$ so that $h_1a_n\to \I$ in $H^q$, then since $p \leq q$, we surely have that $h_1a_n\to \I$ in $H^p$. It therefore remains to show that $\{h_1a_n\}\subset [hA]_p$. To see that this is indeed the case, note that for each $n$ we may select a sequence $\{b_m\}\subset A$ so that $h_2b_m\to a_n$ in $H^r$. Then as required,
 $hb_m = h_1 h_2b_m\to h_1 a_n$ in $H^p$.

Conversely suppose that $h$ is right outer. Let $\epsilon$ be given and select $a, b \in A$ with $\|ha-\I\|_p<\epsilon/2$ and $\|h_2-b\|_r<\epsilon/(2\|h_1\|_q\|a\|_\infty)$. (Here we used the outerness of $h$ and the fact that $h_2\in [A]_r$.) Then $$\|h_1(ba)-\I\|_p \leq \|h_1(ba)-ha\|_p+\|ha-\I\|_p<\epsilon.$$Hence $h_1$ is right outer in $H^p$. But then $h_1$ is also right outer 
in $H^q$. The alternative claim follows analogously.
\end{proof}

We showed in \cite{BL6} that if $\Delta(h) > 0$ then $h$ is left outer in $H^p$ 
if and only if 
$h$ is right outer in $H^p$.  We do not know if this is true in general  if 
$\Delta(h) = 0$.
  In the next section we explore a restricted setting where this does hold.  In view of the fact that $ab = 1$ 
if and only if $b a = 1$ in $M$, we expect that 
this is likely to hold for all outers, but cannot prove this as yet (see Corollary \ref{lr} for a partial result in this direction).  

\section{Diagonally commuting outers and approximation with invertibles}  

In this section we consider a hypothesis which implies many of the  very strong results hitherto only known for strongly outers \cite{BL6}.    For example, under this hypothesis
left and right outers are the same, and  every outer is  uniformly outer.

\begin{definition} \label{dc}  We say that an  outer $h\in H^p$ is {\em diagonally commuting} if there exists a unitary 
$u$ in $\mathcal{D}$ such that 
$|\Phi(h)|$ commutes with $u^*h$.    
\end{definition}

This is of course equivalent to saying that the spectral projections of $|\Phi(h)|$ commute with $u^*h$, where $u$ is as above. 
 Indeed it is well known for $S, T \in L^p(M)$ that 
if $T \geq 0$ then $ST = TS$  iff $S$ commutes with the spectral projections of $T$  (one 
direction of this follows easily from the spectral resolution of $T$, using basic principles from e.g.\ the first pages of \cite{Terp},  the fact that 
$\bar{M}$ is a Hausdorff topological algebra, and that for an element of $M$ commutation with $S$ 
in the latter algebra coincides with the usual commutation  in the theory of unbounded operators.  For the other direction, if $T$ commutes with $S$ (and hence $S^*$), then the Cayley transform of $T$, and the von Neumann algebra it generates, is in the von Neumann algebra of bounded operators commuting with $S$ and $S^*$.  Hence the latter contains the spectral projections of $T$).
We  are particularly interested in the case that $u = 1$, or $u$ is the unitary in the polar decomposition $\Phi(h)=u|\Phi(h)|$.

Definition \ref{dc} holds automatically in many cases of interest.  For example, it is obvious that:

\begin{lemma} \label{allis}  If $M=A=\mathcal{D}$, or if $\mathcal{D}$ is  a subset of the center of  $M$, then every outer in 
$H^p$ is diagonally commuting.  \end{lemma}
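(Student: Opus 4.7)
The plan is to dispatch the two cases separately; each is essentially immediate given the machinery already in the paper.

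For the first case where $M = A = \mathcal{D}$, the conditional expectation $\Phi$ is the identity and $H^p = L^p(M)$, so $\Phi(h) = h$. If $h$ is outer, then the argument from Theorem \ref{isout} applies verbatim (and is subsumed by Corollary \ref{bbb}): choosing $a_n \in M$ with $h a_n \to \I$ in $p$-norm, convergence in measure forces the left support projection of $h$ to be $\I$, and finiteness of $M$ then upgrades the partial isometry in the polar decomposition $h = u|h|$ to a unitary $u \in M = \mathcal{D}$. With this choice of $u$ we have $u^*h = |h| = |\Phi(h)|$, which commutes with $|\Phi(h)|$ trivially.

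For the second case where $\mathcal{D} \subset Z(M)$, I would simply take $u = \I \in \mathcal{D}$. The element $|\Phi(h)|$ is a positive operator affiliated with $\mathcal{D}$, so by the Borel functional calculus its spectral projections all lie in $\mathcal{D} \subset Z(M)$. Since central projections commute with every element of $M$, and hence (via approximation in the topology of convergence in measure) with every element of $L^p(M)$ and $\widetilde{M}$, these spectral projections commute with $h$. By the equivalence recorded in the paragraph following Definition \ref{dc} (that commutation with a positive closed operator is equivalent to commutation with its spectral projections), we conclude that $|\Phi(h)|$ commutes with $u^*h = h$, as required.

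The only substantive point worth checking carefully is the claim that central projections commute with unbounded elements of $L^p(M)$ in the strong sense needed for Definition \ref{dc}, but this is exactly the content of the characterization discussed just after the definition, so no real obstacle arises.
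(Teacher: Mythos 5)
Your proof is correct and follows essentially the same route as the paper, which simply takes $u$ to be the unitary in the polar decomposition $\Phi(h)=u|\Phi(h)|$ in the first case (the unitarity being guaranteed by outerness, as you note) and declares the central case trivial with $u=\I$. You have merely filled in the details the paper leaves implicit.
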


\begin{proof}  If  $M=A=\mathcal{D}$ take $u$ to be the unitary in the polar decomposition $\Phi(h)=u|\Phi(h)|$. The other case is trivial.
 \end{proof}

Before proceeding, we first make the following observation:  Suppose  that $h \in H^p$ is outer, and $e$ is a projection in ${\mathcal D}$ commuting with $h$, and 
let $h_e = he + \kappa e^\perp$, for a scalar $\kappa > 0$.  Then $h_e A$ contains
$\frac{1}{\kappa}h_e e^\perp=e^\perp$, and $he$.  Thus $[h_e A]_p$ contains $[ehA]_p = e[hA]_p = e H^p$, and in particular 
contains $e$, and therefore also $\I = e + e^\perp$.  So $h_e$ is outer in $H^p$.  Morover $\Phi(h_e) = 
\Phi(h) e + \kappa e^\perp$, which is easy to understand.    In particular, $|\Phi(h_e)|^2 = |\Phi(h)|^2 e + \kappa^2 e^\perp$.

Now let   $e_n$ be the spectral projections of $|\Phi(h)|$ 
corresponding to the interval $(1/n,\infty)$, and we will be supposing that these commute with $u^*h$ where $u$ is a fixed unitary 
 in $\mathcal{D}$.

\begin{lemma} \label{dlem0}
Let $h$ be a diagonally commuting outer in $H^p$, where $1 \leq p \leq \infty$, and let $u$ be the associated  unitary  in $\mathcal{D}$.   Then $h_n = he_n+\tfrac{1}{n} u(\I-e_n)$ is
a sequence of strongly (hence uniform) outer elements in $H^p$ which converges
to $h$ in $p$-norm if $p < \infty$, and converges weak* (even  $\sigma$-strong*) if $p = \infty$.
 \end{lemma}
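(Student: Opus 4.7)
The plan is to verify three things: (i) $h_n \in H^p$ is outer; (ii) $\Delta(h_n) > 0$, making $h_n$ strongly outer (and hence uniform outer by Theorem \ref{fak}); and (iii) the claimed convergence $h_n \to h$.

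For (i), I factor $h_n = u \tilde{h}_n$, where $\tilde{h}_n := (u^*h) e_n + \tfrac{1}{n}(\I - e_n)$. Since $u \in \mathcal{D} \subseteq A$ is a unitary and $h$ is outer, $u^*h \in H^p$ is also outer: if $ha_k \to \I$ with $a_k \in A$, then $(u^*h)(a_k u) \to \I$ with $a_k u \in A$. The diagonal commuting hypothesis gives $e_n \in \mathcal{D}$ commuting with $u^*h$, so the observation preceding this lemma applies (with $h \mapsto u^*h$, $e \mapsto e_n$, $\kappa \mapsto 1/n$): $\tilde{h}_n$ is outer in $H^p$, and
\[
|\Phi(\tilde{h}_n)|^2 = |\Phi(h)|^2 e_n + \tfrac{1}{n^2}(\I - e_n),
\]
using $|\Phi(u^*h)| = |u^*\Phi(h)| = |\Phi(h)|$. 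Since the unitary $u \in A$ is trivially right outer, Proposition \ref{onetwo} (applied with $q = \infty$, $r = p$) gives that $h_n = u \tilde{h}_n$ is right outer.

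For (ii), the choice of $e_n$ as the spectral projection of $|\Phi(h)|$ over $(1/n, \infty)$ yields $|\Phi(h)|^2 e_n \geq \tfrac{1}{n^2} e_n$, so the displayed identity gives $|\Phi(\tilde{h}_n)|^2 \geq \tfrac{1}{n^2} \I$. Thus $|\Phi(h_n)| = |u\Phi(\tilde{h}_n)| = |\Phi(\tilde{h}_n)| \geq \tfrac{1}{n}\I$, and polar decomposition in the finite von Neumann algebra $\mathcal{D}$ makes $\Phi(h_n)^{-1}$ a bounded element with $\|\Phi(h_n)^{-1}\|_\infty \leq n$; hence $\|d\|_p \leq n \|\Phi(h_n)d\|_p$ for any $d \in \mathcal{D}$. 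Now applying the Szeg\"{o} formula \eqref{ArvS}: for any $a \in A_0$ and $d \in \mathcal{D}$ with $\Delta(d) \geq 1$, using $\Phi(h_n(a+d)) = \Phi(h_n)\Phi(a+d) = \Phi(h_n) d$ and $L^p$-contractivity of $\Phi$,
\[
\tau(|h_n(a+d)|^p) \geq \tau(|\Phi(h_n) d|^p) \geq \tfrac{1}{n^p}\tau(|d|^p) \geq \tfrac{1}{n^p}\Delta(d)^p \geq \tfrac{1}{n^p},
\]
where the penultimate inequality uses $\tau(x) \geq \Delta(x)$ for $x = |d|^p \geq 0$. Taking the infimum, $\Delta(h_n) \geq 1/n > 0$, so $h_n$ is strongly outer.

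For (iii), directly $h - h_n = (h - \tfrac{1}{n} u)(\I - e_n)$. Outerness of $h$ in $H^p$ forces outerness of $\Phi(h)$ in $L^p(\mathcal{D})$, so $|\Phi(h)|$ has trivial kernel as an affiliated operator, meaning $E_{|\Phi(h)|}(\{0\}) = 0$; hence $e_n \uparrow \I$ strongly and $\I - e_n \to 0$ $\sigma$-strong*. The standard continuity of right multiplication in $L^p(M)$ by uniformly bounded $\sigma$-strong*-null sequences then delivers $(h - \tfrac{1}{n}u)(\I - e_n) \to 0$ in $p$-norm for $p < \infty$, and $\sigma$-strong* (hence weak*) for $p = \infty$. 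The principal obstacle is step (ii): a priori $|h_n|$ itself need not be bounded below, since we lack control over $|h|$ on the range of $e_n$ (note $e_n$ is defined via $|\Phi(h)|$, not $|h|$). The remedy is to descend first via $\Phi$ to $L^p(\mathcal{D})$, where the observation's formula for $|\Phi(\tilde{h}_n)|^2$ gives the clean lower bound $\tfrac{1}{n}$; the Szeg\"{o} formula combined with $L^p$-contractivity of $\Phi$ then transfers this positivity to $\Delta(h_n)$.
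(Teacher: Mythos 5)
Your proof is correct and follows essentially the same route as the paper's: outerness of $h_n$ via the observation preceding the lemma applied to $u^*h$, strong outerness via the lower bound $|\Phi(h_n)|^2 = |\Phi(h)|^2 e_n + \tfrac{1}{n^2}(\I-e_n) \geq \tfrac{1}{n^2}\I$, and the convergence via $e_n \uparrow \I$ together with the domination $|(\I-e_n)h^*|\leq |h^*|\in L^p(M)$ and \cite[Theorem 3.6]{FK} (which is the content of your ``standard continuity of right multiplication by uniformly bounded $\sigma$-strong*-null sequences''). The only divergence is minor: where the paper concludes in one line that $\Delta(h_n)=\Delta(\Phi(h_n))>0$ by invoking the Jensen/Szeg\"o equality from \cite{BL6}, you re-derive the estimate $\Delta(h_n)\geq 1/n$ directly from formula (\ref{ArvS}) using the $L^p$-contractivity and multiplicativity of $\Phi$, which is valid and slightly more self-contained.
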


\begin{proof}    
By the observation above the Lemma,
the outerness of $u^*h$ ensures that $u^*he_n+\tfrac{1}{n}(\I-e_n)$ is outer, and therefore so also is $h_n = he_n+\tfrac{1}{n}u(\I-e_n)$. The $h_n$'s are even strongly outer 
because $u^* \Phi(h_n) = 
u^* \Phi(h) e_n +  \tfrac{1}{n} \, e^\perp$, so that $|\Phi(h_n) |^2 = |\Phi(h)|^2 e_n + \tfrac{1}{n^2} \, e^\perp \geq \tfrac{1}{n^2} \I$,
whence $\Delta(h_n) = \Delta(\Phi(h_n)) > 0$.   

 Since $\tfrac{1}{n}(\I-e_n)$ converges uniformly to 0, and $h e_n \to h$ $\sigma$-strong*, this proves the claim regarding convergence for the case $p=\infty$. In the case $p<\infty$ note that  
$h e_n \to h$ in measure, since $e_n \to 1$ in measure (by Lemma \ref{gh} if necessary). 
 By  \cite[Theorem 3.6]{FK} applied to the adjoints, 
$$\Vert h e_n - h \Vert_p = \Vert (h e_n - h )^* \Vert_p = \Vert e_n h^*- h^* \Vert_p \to 0$$
(note that $|e_n h^*| \leq |h^*| \in L^p(M)$).   \end{proof}

\begin{theorem}
\label{last} Let $h\in H^p$ be a diagonally commuting outer, and let $u$ be the associated  unitary  in $\mathcal{D}$. Then $h$ is 
 right  outer in $H^p$  if and only if  there is a sequence
$\{ e_n \}$ of projections in ${\mathcal D}$ increasing to $\I$, such that $u^* h e_n$ is strongly outer in $H^p(e_n M e_n)$.
In this case,   $h$ is uniformly outer in $H^p$.   
\end{theorem}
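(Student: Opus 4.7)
I would prove the two implications separately and derive the uniform-outer conclusion in the course of the converse.

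For the forward direction, let $e_n$ denote the spectral projection of $|\Phi(h)|$ for $(1/n,\infty)$, so $e_n \in \mathcal{D}$. By the diagonal-commuting hypothesis, $|\Phi(h)|$ (and hence each $e_n$) commutes with $u^*h$. Since $h$ is outer, $\Phi(h)$ is outer in $L^p(\mathcal{D})$, which forces the support of $|\Phi(h)|$ to be $\I$, so $e_n \uparrow \I$. I would then verify that $u^*he_n$ is strongly outer in $H^p(e_nMe_n)$, using that $e_nAe_n$ is a finite maximal subdiagonal algebra of $e_nMe_n$ with the restricted expectation $\Phi$ (routine since $e_n \in \mathcal{D}$ and $\Phi$ is multiplicative on $A$). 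Outerness of $u^*he_n$ in $H^p(e_nMe_n)$ follows from selecting $a_m \in A$ with $u^*h\,a_m \to \I$ and compressing by $e_n$: commutativity $e_nu^*h = u^*he_n$ gives $(u^*he_n)(e_na_me_n) \to e_n$ in $L^p(e_nMe_n)$.

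For positivity of the determinant in $e_nMe_n$, I would apply Lemma \ref{dlem0}, obtaining the strongly outer element $h_n = he_n + \tfrac{1}{n}u(\I - e_n)$ in $H^p(M)$, so $\Delta(u^*h_n) = \Delta(h_n) > 0$. Since $u^*h_n = u^*he_n + \tfrac{1}{n}(\I - e_n)$ is block-diagonal with respect to $\{e_n, \I - e_n\}$, one has $|u^*h_n| = |u^*he_n| + \tfrac{1}{n}(\I - e_n)$, and the spectral distribution splits, yielding the standard factorization
$$\Delta(u^*h_n) \;=\; \Delta^{\tau_{e_n}}_{e_nMe_n}(u^*he_n)^{\tau(e_n)} \cdot \left(\tfrac{1}{n}\right)^{\tau(\I - e_n)}.$$
Positivity of the left-hand side forces $\Delta^{\tau_{e_n}}_{e_nMe_n}(u^*he_n) > 0$, completing the strong outerness.

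For the converse, and simultaneously for the uniform-outer conclusion, I apply Theorem \ref{fak} inside $e_nMe_n$ to the strongly outer element $u^*he_n$, obtaining $c_n \in e_nAe_n$ with $\|u^*he_n\,c_n\|_\infty \le 1$ and $\|u^*he_n\,c_n - e_n\|_p^{e_n} < 1/n$ (in the case $p = \infty$, with weak$^*$ approximation in place of $p$-norm). Setting $b_n := c_n u^* \in A$, one computes
$$hb_n \;=\; he_nc_nu^* \;=\; u(u^*he_nc_n)u^*,$$
so $\|hb_n\|_\infty \le 1$, and using $\|x\|_p \le \|x\|_p^{e_n}$ for $x \in e_nMe_n$ together with $\|\I - e_n\|_p = \tau(\I - e_n)^{1/p} \to 0$,
$$\|hb_n - \I\|_p \;\le\; \|u^*he_nc_n - e_n\|_p + \|\I - e_n\|_p \;\longrightarrow\; 0.$$
(For $p = \infty$, one uses weak$^*$ convergence throughout, exploiting $e_n \to \I$ weak$^*$.) Hence $h$ is uniform outer in $H^p$, and in particular right outer.

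The main obstacle is the block-diagonal factorization of $\Delta$ in the forward direction, where one must correctly handle the renormalized versus unnormalized trace weights across $M = e_nMe_n \oplus (\I - e_n)M(\I - e_n)$; the rest of the argument is essentially formal, provided one is careful that Theorem \ref{fak} applies within the finite subdiagonal algebra $e_nAe_n \subseteq e_nMe_n$.
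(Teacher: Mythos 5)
Your argument is correct and shares the paper's skeleton (spectral projections $e_n$ of $|\Phi(h)|$ for the forward direction; Theorem \ref{fak} applied in the corners $e_nMe_n$ for the converse), but it diverges from the paper at two sub-steps, in both cases instructively. (a) For strong outerness of $u^*he_n$, the paper argues directly from the lower bound $|\Phi(h)|^2e_n\geq \tfrac{1}{n^2}e_n$ via the Jensen inequality, getting $\Delta_n(u^*he_n)\geq \Delta_n(|\Phi(h)|e_n)\geq \tfrac{1}{n}$; you instead route through Lemma \ref{dlem0} and a block-diagonal factorization $\Delta(u^*h_n)=\Delta_n(u^*he_n)^{\tau(e_n)}(1/n)^{\tau(\I-e_n)}$. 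Your factorization is legitimate (the cross terms in $|u^*h_n|^2$ vanish precisely because $e_n$ commutes with $u^*h$, and the trace weights are handled correctly), but it is longer and leans on the determinant theory for affiliated operators where the paper's one-line Jensen estimate suffices. (b) For the converse, the paper takes a double weak* limit $\I=\lim_n\lim_m u^*he_na^n_m$ of contractions and invokes Mazur's theorem in $H^r$ to extract a single sequence; your quantitative diagonalization --- choosing $c_n$ with $\Vert u^*he_nc_n-e_n\Vert_p^{e_n}<1/n$ and using $\Vert\I-e_n\Vert_p\to 0$ --- is cleaner and avoids Mazur entirely for $p<\infty$. The one point to tighten is your parenthetical treatment of $p=\infty$: a double weak* limit cannot be naively diagonalized without metrizability of the weak* topology on the ball. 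But since all the elements in play are operator-norm contractions, you should simply run your own $p<\infty$ argument in the $2$-norm and then invoke Lemma \ref{gh} to upgrade $2$-norm convergence of contractions to weak* (and in-measure) convergence; that is exactly what the definition of uniform outer requires, and it repairs the $p=\infty$ case in one line.
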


\begin{proof}  
The major portion of one direction of the proof is contained in 
the previous proof. We merely verify that $u^* he_n$ is then actually strongly outer in $H^p(e_nMe_n)$. Notice that on selecting a net $\{g_m\}$ in $A$ for which $u^* h_ng_m\to \I$, compressing this expression with $e_n$ will yield the conclusion that $u^* he_n(e_ng_me_n)\to e_n$, and hence that $u^* he_n$ is outer in $H^p(e_nMe_n)$. To see that it is strongly outer, one uses the fact that $|\Phi(h)|^2 e_n\geq\frac{1}{n^2}e_n$. If $\Delta_n$ is the Fuglede-Kadison determinant for $e_nMe_n$ computed using the normalised trace $\frac{1}{\tau(e_n)}\tau(\cdot)$, this shows that
 $\Delta_n(u^* he_n)\geq\Delta_n(u^* \Phi(h)e_n) = \Delta_n(|\Phi(h)| e_n) \geq \frac{1}{n}$. 

For the other direction, first suppose that $p = \infty$, and note that    
by Theorem \ref{fak}, there is a sequence 
$\{ a^n_m \}_m$ of outers in $e_nAe_n$ with $\Vert h e_n a^n_m \Vert \leq 1$ and $u^* h e_n a^n_m \to e_n$ weak*.   Then we have $\I = \lim_n \, \lim_m u^* h e_n a^n_m$, a double weak* limit of contractions.  
This may be rewritten  
as a net  of contractions converging weakly in $H^r$ for any $r \in  
[1,\infty)$.  By Mazur's theorem applied to the set of operator norm
contractions in $u^* hA$ inside $H^r$, 
taking convex combinations yields a sequence $\{ b_n \} \subset A$ with $\Vert h b_n \Vert \leq  
1$ and  $u^* h b_n \to 1$ in $r$-norm.   If $p = \infty$
the latter implies $u^* h b_n \to 1$ weak*.  Thus $u^* h$, and hence $h$, is uniformly outer in $H^p$.
  \end{proof}  

\begin{corollary}  Let $h\in H^p$ ($1\leq p\leq \infty$) be a diagonally commuting right outer.  Then $h$ is also left  outer in $H^p$.
\end{corollary}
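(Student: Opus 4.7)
The plan is to mirror the right-outer direction of the proof of Theorem \ref{last}, but using left-outer approximants in each compression. By Theorem \ref{last} there exist projections $\{e_n\}$ in $\mathcal{D}$ with $e_n \uparrow \I$ such that $u^*he_n$ is strongly outer in $H^p(e_nMe_n)$, where $u \in \mathcal{D}$ is the unitary from the diagonal commutation hypothesis. Since strongly outer elements are simultaneously left and right outer by \cite{BL6}, each $u^*he_n$ is in particular \emph{left} outer in $H^p(e_nMe_n)$. Thus for every $n$ there is a sequence $\{\tilde a^n_m\}_m \subset e_nAe_n$ with $\tilde a^n_m\, u^*h\, e_n \to e_n$ in the relevant topology on $L^p(e_nMe_n)$ ($p$-norm if $p<\infty$, weak* if $p=\infty$), and consequently in $L^p(M)$ as well.

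Next, diagonal commutation gives $e_n\, u^*h = u^*h\, e_n$, which together with $\tilde a^n_m = \tilde a^n_m e_n$ yields $\tilde a^n_m u^*h e_n = \tilde a^n_m u^*h$. Setting $c^n_m := \tilde a^n_m u^*$, which lies in $A$ because $u^* \in \mathcal{D} \subset A$, the previous display rewrites as $c^n_m h \to e_n$ in the relevant topology. Hence $e_n \in [Ah]_p$ for every $n$. Since $\|\I - e_n\|_p = \tau(\I - e_n)^{1/p} \to 0$ when $p<\infty$, and $e_n \to \I$ weak* when $p=\infty$, closedness of $[Ah]_p$ gives $\I \in [Ah]_p$, which is precisely the statement that $h$ is left outer in $H^p$.

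The only delicate step is the commutation reduction $\tilde a^n_m u^*h e_n = \tilde a^n_m u^*h$; this is the sole place where the diagonal commutation hypothesis is used. Once that identity is in hand, the result falls out from closedness of $[Ah]_p$ and does not require the operator-norm bookkeeping or Mazur-type convex-combination argument appearing in the proof of Theorem \ref{last}.
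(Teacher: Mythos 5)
Your proof is correct and is essentially the paper's argument made explicit: the paper reduces to $u=\I$ and appeals to the left--right symmetry of the strong-outerness clause in Theorem \ref{last}, which is precisely the symmetry you exploit via the projections $e_n$, the fact from \cite{BL6} that strongly outer elements are both left and right outer, and the commutation $u^*he_n=e_nu^*h$. Your final step, passing from $e_n\in[Ah]_p$ and $e_n\to\I$ directly to $\I\in[Ah]_p$, is a mild streamlining of invoking the left-handed backward direction of Theorem \ref{last}, since plain (rather than uniform) left outerness does not require the Mazur convex-combination step.
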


\begin{proof}    Since multiplying by a unitary in ${\mathcal D}$ does not effect being outer, we may assume that $u = 1$.  Then 
this follows by the symmetry of the clause after the `if and only if'  in Theorem \ref{last}.  
\end{proof}

\begin{corollary}  Let $h\in H^p$ be a diagonally commuting outer.  If $p < \infty$ then there exists a sequence $(z_n)$ of outer elements in $A$,
with $z_n^{-1} \in H^p$, such that $z_n^{-1} \to h$   in $p$-norm.
If $h \in  A$, then $h$ is a weak* limit (which is also a limit in $r$-norm for any $1\leq r<\infty$) of a bounded sequence of invertible elements 
$z_n \in A^{-1}$.  \end{corollary}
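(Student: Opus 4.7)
The strategy is a two-level approximation: first use Lemma \ref{dlem0} to approximate the diagonally commuting outer $h$ by strongly outer elements $h_n = h e_n + \tfrac{1}{n} u(\I - e_n)$, and then apply Theorem \ref{fak} to each $h_n$ to obtain explicit ``invertible-like'' approximants; a diagonal selection then produces the required sequence $z_n$.

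For the first assertion ($p < \infty$), Lemma \ref{dlem0} provides strongly outer $h_n \in H^p$ with $h_n \to h$ in $p$-norm. Since $\Delta(h_n) > 0$, Theorem \ref{fak} supplies, for each fixed $n$, a sequence of outers $a_m^{(n)} \in A$ with $(a_m^{(n)})^{-1} \in H^p$ and $(a_m^{(n)})^{-1} \to h_n$ in $p$-norm as $m \to \infty$. Choosing $m_n$ so that $\|(a_{m_n}^{(n)})^{-1} - h_n\|_p < 1/n$ and setting $z_n := a_{m_n}^{(n)}$, the triangle inequality yields $z_n^{-1} \to h$ in $p$-norm, while each $z_n$ is outer in $A$ with $z_n^{-1} \in H^p$ as required.

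For the second assertion, with $h \in A$, I would first upgrade the $\sigma$-strong$^*$ convergence of Lemma \ref{dlem0} to $r$-norm convergence for every $1 \leq r < \infty$: indeed $\|h(e_n - \I)\|_2^2 = \tau(h^*h(\I - e_n)) \to 0$ by normality of $\tau$, which combined with the operator-norm boundedness of $h_n$ yields $r$-norm convergence by interpolation with $L^\infty$. Next, apply Theorem \ref{fak} with $p = \infty$ to each strongly outer $h_n \in A$; this produces $a_m^{(n)} \in A^{-1}$ with $\|h_n a_m^{(n)}\|_\infty \leq 1$, $|(a_m^{(n)})^{-1}| \leq |h_n| + \I$, and $h_n a_m^{(n)} \to \I$ in 2-norm (by the proof of Theorem \ref{fak}). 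From the identity $(a_m^{(n)})^{-1} - h_n = (\I - h_n a_m^{(n)})(a_m^{(n)})^{-1}$, the uniform bound $\|(a_m^{(n)})^{-1}\|_\infty \leq \|h_n\|_\infty + 1 \leq \|h\|_\infty + 2$, and H\"older, one gets $(a_m^{(n)})^{-1} \to h_n$ in 2-norm. A diagonal selection of $m_n$ with $\|(a_{m_n}^{(n)})^{-1} - h_n\|_2 < 1/n$, combined with $h_n \to h$ in 2-norm, yields $z_n := (a_{m_n}^{(n)})^{-1} \in A^{-1}$ with $\|z_n - h\|_2 \to 0$ and $\|z_n\|_\infty \leq \|h\|_\infty + 2$. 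Uniform boundedness in $M$ together with 2-norm convergence then promotes this to convergence in every $r$-norm with $1 \leq r < \infty$ (by interpolation with the operator norm), and to weak$^*$ convergence (because $L^2(M)$ is norm-dense in $M_* = L^1(M)$, so one approximates an arbitrary $L^1$-functional by an $L^2$-functional, where Cauchy--Schwarz finishes).

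The main obstacle is not the diagonalisation itself but extracting from Theorem \ref{fak} (for $p = \infty$) the stronger $r$-norm convergence of $(a_m^{(n)})^{-1}$ to $h_n$ that is needed; this hinges on the fact that the proof of Theorem \ref{fak} actually establishes $h_n a_m^{(n)} \to \I$ in 2-norm and that $(a_m^{(n)})^{-1}$ is bounded by $|h_n| + \I$. With this in hand, everything else is bookkeeping.
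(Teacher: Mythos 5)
Your proposal is correct and follows the paper's own argument: approximate $h$ by the strongly outer elements $h_n$ of Lemma \ref{dlem0}, apply Theorem \ref{fak} to each $h_n$, and make a diagonal selection. The only (harmless) divergence is in the $h \in A$ case, where the paper performs the diagonal extraction in the metrizable topology of convergence in measure and then upgrades to $2$-norm and weak* convergence via Fack--Kosaki, whereas you diagonalize directly in the $2$-norm and use the uniform operator-norm bound to obtain the $r$-norm and weak* conclusions; both routes work.
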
     

\begin{proof}  
Simply apply the proof of Theorem 2.8 to the  $h_n$ 
defined in Lemma \ref{dlem0}:  one obtains a sequence of outers $a_m = a^n_m \in A$ with $a_m^{-1} \in H^p$
and
$\Vert a_m^{-1} - h_n \Vert_p \to 0$ as $m\to \infty$.  Choose one of these, say $a(n)^{-1}$ with
$\Vert  a(n)^{-1} - h_n \Vert_p < 1/n$.  Then $a(n)^{-1} \to h$ in $p$ norm, since $$\Vert a(n)^{-1} - h \Vert_p \leq \Vert a(n)^{-1} - h_n  \Vert_p
+ \Vert h_n - h \Vert_p \to 0.$$

In the case $p = \infty$  note that $h_n \to h$ in measure.  Also  in  
the proof of Theorem \ref{fak}
we have $a_m^{-1} - h_n = (1 - h_n a_m) a_m^{-1}$, and since 
$\{a_m^{-1}\}$ is uniformly bounded, we see that $a_m^{-1} \to h_n$
in any $p$-norm, and hence in measure.
Thus one has a double limit in measure $\lim_n \, \lim_m  (a^m_n)^{-1} =
\lim_n h_n = h$, with $\Vert (a^m_n)^{-1} \Vert \leq
\Vert h_n \Vert + \frac{1}{n} \leq
\Vert h \Vert + \frac{2}{n}$ always.   Since the convergence in  
measure topology is metrizable,
there is a bounded sequence of invertible elements
$z_n \in A^{-1}$ converging in measure to $h$. (To see that the topology of convergence in measure on $\widetilde{M}$ is metrizable, note that by \cite[Theorem I.28]{Terp} and its proof, $\widetilde{M}$ is a complete Hausdorff topological $*$-algebra under this topology, with a countable base of neighbourhoods at 0.)    By \cite[Theorem  
3.6]{FK}, the convergence
is also in $2$-norm, which implies weak* convergence since it is a  
bounded sequence.
   \end{proof}   

We conjecture that in the general case refinements of the above ideas will essentially still work to give similar results,
but suspect that  this will need a quite  sophisticated modification of the proof of Theorem \ref{fak}.

\section{Inner-outer factorization and the characterization of
outers}
 
\begin{lemma} \label{dlem}
Let $h\in H^p$ be given, where $1 \leq p < \infty$.  If  $\Phi(h)$ is outer in $L^p(\mathcal{D})$
then $h$ is of the form $h=ug$ where $g\in H^p$ is outer and $u\in A$ is a unitary.
If $\Phi(h)$ is strongly outer then so is $g$.
 \end{lemma}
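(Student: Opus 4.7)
My strategy is to factor $h$ via a Beurling-type analysis of the cyclic right $A$-submodule $K := [hA]_p$ of $H^p$, and then use the hypothesis that $\Phi(h)$ is outer in $L^p(\mathcal{D})$ to promote a partial isometry to a unitary.

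First I would observe that since $\Phi(h) \neq 0$ (being outer), $h \notin H^p_0$, so $K$ is not contained in $H^p_0$. This rules out the fully $A$-invariant piece in the noncommutative Beurling theorem for closed right-$A$-invariant subspaces of $H^p$. Applied to the cyclic $K$, the theorem yields $K = u H^p$ for some partial isometry $u \in A$ with $e := u^* u$ a projection in $\mathcal{D}$. Since $h \in K$ we may write $h = u y$ for some $y \in H^p$, and setting $g := u^* h$ gives $g = e y \in e H^p$, $u g = u e y = u y = h$, and
\[
[g A]_p = [u^* h A]_p = u^* [h A]_p = u^* u H^p = e H^p.
\]

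Next I would use the hypothesis to force $e = 1$. Multiplicativity of $\Phi$ on $A$ extends by $p$-norm continuity to the identity $\Phi(u y) = \Phi(u) \Phi(y)$ for $y \in H^p$, giving $\Phi(h) = \Phi(u) \Phi(g)$. The identity $u = u e$ yields $\Phi(u) = \Phi(u) e$, so the right support $s_r(\Phi(u))$ in the finite von Neumann algebra $\mathcal{D}$ satisfies $s_r(\Phi(u)) \leq e$. By polar decomposition in $\mathcal{D}$, the left and right supports of $\Phi(u)$ have equal trace, so $\tau(s_l(\Phi(u))) = \tau(s_r(\Phi(u))) \leq \tau(e)$. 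Since $\Phi(h)$ is outer in $L^p(\mathcal{D})$ its left support is $1$, and from $\Phi(h) = \Phi(u) \Phi(g)$ we have $s_l(\Phi(h)) \leq s_l(\Phi(u))$, hence
\[
1 = \tau(s_l(\Phi(h))) \leq \tau(s_l(\Phi(u))) \leq \tau(e),
\]
forcing $e = 1$. Thus $u$ is an isometry in $A$, and by finiteness of $M$ a unitary, while $[g A]_p = H^p$ shows $g$ is outer.

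For the strongly outer claim: unitarity of $u \in A$ together with multiplicativity of $\Phi$ on $A$ gives $\Phi(u) \Phi(u^*) = \Phi(u u^*) = 1 = \Phi(u^*) \Phi(u)$, so $\Phi(u)$ is unitary in $\mathcal{D}$ and $\Delta(\Phi(u)) = 1$. Consequently $\Delta(\Phi(g)) = \Delta(\Phi(u)^* \Phi(h)) = \Delta(\Phi(h))$, and by Jensen's equality for maximal subdiagonal algebras $\Delta(g) = \Delta(\Phi(g))$, so strong outerness of $\Phi(h)$ transfers directly to strong outerness of $g$. The main obstacle is the precise invocation of the noncommutative Beurling factorization: one must ensure that the absence of a fully invariant piece (secured by $\Phi(h) \neq 0$) together with the cyclicity of $K$ produces a single partial isometry $u \in A$ with $u^* u \in \mathcal{D}$, rather than a possibly infinite orthogonal direct sum of such pieces.
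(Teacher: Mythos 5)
Your overall strategy (Beurling-type factorization of $[hA]_p$, then use outerness of $\Phi(h)$ to upgrade a partial isometry to a unitary) is in the same family as the paper's, but the step you yourself flag as ``the main obstacle'' is a genuine gap, not a technicality, and the argument does not go through as written. The Beurling theorem of \cite{BL3} decomposes a closed right $A$-invariant subspace as a column sum $Z\oplus(\oplus_i u_iH^p)$ with $Z$ the fully invariant (``type 2'') part and the $u_i$ partial isometries with $u_i^*u_i\in\mathcal{D}$ and mutually orthogonal ranges. Your observation that $h\notin H^p_0$ only shows $[hA]_p\neq Z$; it does not show $Z=0$, and cyclicity of $[hA]_p$ over $A$ does not by itself collapse the family $\{u_i\}$ to a single partial isometry (summands with orthogonal initial projections already merge into one, so the obstruction is summands with overlapping initial projections, and one cannot rule these out without using $\Phi(h)$). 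The hypothesis that $\Phi(h)$ is outer must be injected precisely at this point. The paper does this by showing that $h+[hA_0]_p$ is a cyclic \emph{and separating} vector for the right $\mathcal{D}$-action on the wandering quotient $[hA]_p/[hA_0]_p$ --- cyclicity from the module isomorphism $hA/hA_0\cong\Phi(h)\mathcal{D}$, separation because $hd\in[hA_0]_p$ forces $\Phi(h)d=0$ and hence $d=0$ --- and then citing \cite[Proposition 4.7]{BL3}, which is exactly the result converting ``cyclic separating vector for the wandering quotient'' into $h=ug$ with $u$ unitary. Your trace-of-supports computation is essentially a fragment of the proof of that proposition, but as written it presupposes the single-summand form $K=uH^p$ that it is meant to help establish. (The paper then still has to prove $u\in A$, via $\tau(ua_0)=0$ for $a_0\in A_0$; in your setup this would indeed come for free from $u\in uH^p=[hA]_p\subset H^p$, which is the one genuine economy of your route.)

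The strongly outer part contains an outright error. You assert $\Phi(u)\Phi(u^*)=\Phi(uu^*)=1$, but multiplicativity of $\Phi$ holds only on $A\times A$, and $u^*\notin A$ for a general unitary $u\in A$. In fact $\Phi(u)$ is typically \emph{not} unitary under the hypotheses of the lemma: in the classical disk case $u$ can be any inner function with $u(0)\neq 0$, and then $\Phi(u)=u(0)$ has modulus strictly less than $1$. (Were $\Phi(u)$ unitary, one would get $u\in\mathcal{D}$ and $h$ itself outer.) Consequently $\Phi(g)\neq\Phi(u)^*\Phi(h)$ in general and your claimed equality $\Delta(\Phi(g))=\Delta(\Phi(h))$ fails. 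The correct and shorter argument is the paper's: from $\Phi(h)=\Phi(u)\Phi(g)$ and multiplicativity of $\Delta$ one has $0<\Delta(\Phi(h))=\Delta(\Phi(u))\Delta(\Phi(g))$, so $\Delta(\Phi(g))>0$, and $\Delta(g)=\Delta(\Phi(g))$ then gives $\Delta(g)>0$.
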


\begin{proof}  Suppose that $\Phi(h)$ is outer in $L^p(\mathcal{D})$.  
Consider the map $\Phi_{\vert hA} : hA \to \Phi(h) \mathcal{D}$.  This
is a $\mathcal{D}$-module map.  Its  kernel is $hA_0$ since $$\Phi(ha) =
\Phi(h) \Phi(a) = 0 \; \Rightarrow \; \Phi(a) = 0  \; \Rightarrow \; a \in A_0 .$$
Thus $hA/(hA_0) \cong \Phi(h) \mathcal{D}$ as $\mathcal{D}$-modules.  Since $\Phi(h) \mathcal{D}$ is
singly generated by $\Phi(h)$ as a $\mathcal{D}$-module, the module
$hA/(hA_0)$ is singly generated by $h + (hA_0)$.  It is easy to see that this
implies that $[hA]_p/[hA_0]_p$ is topologically singly generated by $h + [hA_0]_p$.
That is, the wandering quotient has a cyclic vector for the action of $\mathcal{D}$.

Next, if $hd \in [hA_0]_p \subset [A_0]_p = {\rm Ker}(\Phi_{\vert H^p})$, then $\Phi(h) d
= \Phi(hd) = 0$.  So $d = 0$ since $\Phi(h)$ is outer.  This implies that
$h + [hA_0]_p$ is a separating vector for the action of $\mathcal{D}$ on the wandering quotient.

We have shown that if $\Phi(h)$ is outer in $L^p(\mathcal{D})$, then
$h + [hA_0]_p$ is a cyclic separating vector for the wandering quotient
(in the case
$p = 2$ we can use the wandering subspace, and the cyclic separating vector will be $P(h)$, 
where in this case $P$ is the orthogonal projection  $P:[hA]_2\to[hA]_2 \ominus [hA_0]_2$.)   
As in \cite{BL3} this implies that $h=ug$ where $g\in H^p$ is outer and $u\in M$
is unitary (see \cite[Proposition 4.7]{BL3} (ii) and the lines before the Closing Remark of that paper).
 It remains to show that $u\in A$. To see this notice that for any $a_0\in A_0$ we have that $0=\tau(ha_0)=\tau(u(ga_0))$. Since $g$ is outer, we also have that $[gA_0]_p=H^p_0$. Hence we in fact have that $0=\tau(ua_0)$ for any $a_0\in H^p_0$, and in particular for all $a_0\in A_0$. But then $u \in H^2=L^2\ominus (H^2_0)^*$. Thus as required $u\in A= H^2\cap M$.

For the strongly outer statement, if $0 < \Delta(\Phi(h)) = \Delta(\Phi(u)) \Delta(\Phi(g))$ then $0 < \Delta(\Phi(g)) = \Delta(g)$.  
\end{proof}

{\bf Remark.}  Thus for 
$h\in H^p$ as above, $h$ is outer in $L^p(M)$ if $\Phi(h)$ is outer in $L^p(\mathcal{D})$.

\begin{corollary} \label{lr}   If $h \in H^p$ is left outer then $h = u k$ for a unitary $u \in A$ and 
a left outer $k \in H^p$ which is also right outer.  
\end{corollary}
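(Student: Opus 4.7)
The plan is to reduce the claim to Lemma \ref{dlem}: that lemma already yields a factorization $h = uk$ with $u \in A$ unitary and $k \in H^p$ right outer, provided I can verify that $\Phi(h)$ is outer in $L^p(\mathcal{D})$. Once this is done, the fact that $u$ actually lies in $A$ will let me transfer left outerness from $h$ to $k$ almost for free.

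To establish that $\Phi(h)$ is outer in $L^p(\mathcal{D})$, I would start from a sequence $(a_n) \subset A$ with $a_n h \to \I$ in $p$-norm (supplied by left outerness of $h$) and apply the $L^p$-extension of $\Phi$. Using multiplicativity of $\Phi$ on $A$, extended by continuity to products of elements of $A$ with elements of $H^p$, one obtains $\Phi(a_n)\Phi(h) = \Phi(a_n h) \to \I$ in $L^p(\mathcal{D})$. Writing $\Phi(h) = v|\Phi(h)|$ for the polar decomposition inside $\mathcal{D}$ and setting $e := \I - v^*v$, note that if $e \neq 0$ then $\Phi(h) e = 0$, so right-multiplying the approximation by $e$ forces $0 = \Phi(a_n)\Phi(h) e \to e$, which is absurd. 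Hence $v^*v = \I$, and since $\mathcal{D}$ is finite, $v$ is in fact a unitary. Consequently $|\Phi(h)|$ has full support and so is invertible as an unbounded operator; by the characterization of outers in $L^p$ of a von Neumann algebra recalled in the introduction, $\Phi(h)$ is outer in $L^p(\mathcal{D})$.

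Having confirmed the hypothesis of Lemma \ref{dlem}, that lemma supplies the factorization $h = uk$ with $u \in A$ unitary and $k \in H^p$ right outer. For the left outerness of $k$, set $b_n := a_n u$; then $b_n \in A$ (since $u \in A$) and $b_n k = a_n(uk) = a_n h \to \I$ in $p$-norm, so indeed $\I \in [Ak]_p$. The only substantive content beyond routine bookkeeping is the promotion, inside the finite von Neumann algebra $\mathcal{D}$, of the one-sided density $\Phi(a_n)\Phi(h) \to \I$ to genuine (right) outerness of $\Phi(h)$; this is where finiteness of $\mathcal{D}$ is genuinely used, to force the partial isometry in the polar decomposition to be unitary and so collapse the distinction between left and right outer for elements of $L^p(\mathcal{D})$.
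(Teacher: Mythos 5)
Your proof is correct and follows essentially the same route as the paper: deduce that $\Phi(h)$ is outer in $L^p(\mathcal{D})$, invoke Lemma \ref{dlem} to get $h=uk$ with $u\in A$ unitary and $k$ right outer, and then transfer left outerness to $k$. You merely fill in two details the paper leaves implicit or delegates to Proposition \ref{onetwo} --- the support-projection argument showing that left outerness of $\Phi(h)$ in the von Neumann algebra $\mathcal{D}$ already forces (right) outerness, and the direct computation $a_n u\cdot k = a_n h\to\I$ giving left outerness of $k$ --- both of which are sound.
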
   

\begin{proof}  If $h$  is left outer then  $\Phi(h)$ is outer  in $L^p(\mathcal{D})$.
By Lemma \ref{dlem} and Proposition \ref{onetwo} we have $h = uk$, with $u$ inner and $k$ both left and right outer. \end{proof} 


The following must be well known:

\begin{lemma} \label{mstk}  Let $h\in L^p(M)$ be given, where $1 \leq p < \infty$,  and suppose that 
$\Vert ah  \Vert_p = \Vert h \Vert_p$ for a contraction $a \in M$.  Then $h = a^* a h$.  If in addition the left support of $h$ is $\I$ then $a$ is a unitary.   \end{lemma}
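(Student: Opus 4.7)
The plan is to chain a short sequence of operator inequalities together with a trace identity. First I would note that since $a$ is a contraction, $a^*a \leq \I$, and therefore $|ah|^2 = h^*a^*a h \leq h^*h = |h|^2$; operator monotonicity of the square root then yields $0 \leq |ah| \leq |h|$ in $L^p(M)$.

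The heart of the argument is to upgrade this inequality to the equality $|ah| = |h|$ using the hypothesis $\|ah\|_p = \|h\|_p$. Writing $X = |ah|$, $Y = |h|$ and $Z = Y - X \geq 0$, I would consider $g(t) = \tau((X+tZ)^p)$ on $[0,1]$. This is non-decreasing in $t$ because $X+tZ$ is operator increasing, and the hypothesis $g(0) = \|X\|_p^p = \|Y\|_p^p = g(1)$ forces $g$ to be constant. Differentiating (using the tracial identity $g'(t) = p\,\tau((X+tZ)^{p-1}Z)$, which is valid by cyclicity of $\tau$) at $t = 1$ gives $\tau(Y^{p-1}Z) = 0$; since $Y^{p-1}, Z \geq 0$ and $\tau$ is faithful, this forces $Y^{p-1}Z = 0$. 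Because $\ker(Y^{p-1}) = \ker(Y) = \ker(X+Z) = \ker(X) \cap \ker(Z)$, the range of $Z$ lies in $\ker(Z)$, so $Z^2 = 0$ and $Z = 0$. (For $p = 1$ this shortcuts: $g$ is affine, $g(0) = g(1)$ directly gives $\tau(Z) = 0$, and faithfulness finishes.)

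Once $|ah|^2 = |h|^2$ is in hand, the rest is essentially algebra. The equality $h^*(\I - a^*a)h = 0$, upon setting $c = (\I - a^*a)^{1/2} \in M$, reads $(ch)^*(ch) = 0$, so $ch = 0$ and hence $(\I - a^*a) h = c^2 h = 0$, giving $h = a^*a h$. For the second claim, the relation $(\I - a^*a) h = 0$ says that the bounded operator $\I - a^*a \in M$ vanishes on the range of $h$; if the left support of $h$ is $\I$ then that range is dense, so $\I - a^*a = 0$, i.e.\ $a^*a = \I$. Finiteness of $M$ (recalled in the introduction) then forces $aa^* = \I$, so $a$ is a unitary.

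The main obstacle I anticipate is the equality case in the monotonicity $\|X\|_p \leq \|Y\|_p$ for $0 \leq X \leq Y$ in $L^p(M)$: namely forcing $X = Y$ when the $p$-norms coincide. The trace-derivative argument outlined above handles this cleanly, but one must justify differentiation under the trace for non-integer $p$, which is a standard but slightly technical point.
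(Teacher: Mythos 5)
Your endgame coincides with the paper's: once $h^*(\I-a^*a)h=0$ is in hand, both arguments factor through $c=(\I-a^*a)^{1/2}$ to get $h=a^*ah$, and both use finiteness of $M$ for the unitary claim. The difference lies entirely in how the equality of norms is upgraded to $|ah|=|h|$. The paper works with $S=h^*a^*ah\le T=h^*h$ and uses the Fack--Kosaki generalized $s$-numbers: $\mu_s(S)\le\mu_s(T)$, so $\Vert S\Vert_{p/2}=\Vert T\Vert_{p/2}$ forces $\mu_s(S)=\mu_s(T)$ a.e., whence $\tau(T^{1/2}-S^{1/2})=0$ with $T^{1/2}-S^{1/2}\ge 0$ (Schmitt) and faithfulness finishes. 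You instead run a first-variation argument on $g(t)=\tau((X+tZ)^p)$. This is a legitimate alternative and your kernel/support bookkeeping at the end ($Y^{p-1}Z=0\Rightarrow Z^2=0\Rightarrow Z=0$) is correct, but be aware of two costs. First, your claim that $g$ is non-decreasing already uses $0\le A\le B\Rightarrow\tau(A^p)\le\tau(B^p)$, which for $p>1$ is \emph{not} operator monotonicity of $x\mapsto x^p$ (false for $p>1$) but exactly the $s$-number monotonicity $\mu_s(A)\le\mu_s(B)$ from Fack--Kosaki -- so you have not really escaped that machinery. Second, and more seriously, the identity $g'(t)=p\,\tau((X+tZ)^{p-1}Z)$ for unbounded $\tau$-measurable operators and non-integer $p$ is the genuine content of your route: it amounts to Gateaux differentiability of $\Vert\cdot\Vert_p^p$ on $L^p(M)$, which is true but is a substantive theorem (and delicate for $1<p<2$, where $x\mapsto x^{p-1}$ is not Lipschitz), not a routine interchange of limit and trace. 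If you want to keep your approach while avoiding differentiation altogether, apply the trace form of Klein's inequality $\tau(f(A))-\tau(f(B))\ge\tau(f'(B)(A-B))$ with $f(x)=x^p$, $A=Y$, $B=X$: this gives $\tau(X^{p-1}Z)\le 0$, hence $=0$, hence $s(X)s(Z)=0$, so $\tau(Y^p)=\tau(X^p)+\tau(Z^p)$ and $Z=0$. On balance the paper's $s$-number computation is the more economical and self-contained route; your argument is correct in outline but, as written, rests on a differentiation step that needs either a citation or replacement.
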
 \begin{proof} 
We have $\tau((h^* a^* a h)^{\frac{p}{2}} )= \tau((h^* h)^{\frac{p}{2}})$.  Let $T = h^* h, S = h^* a^* a h$. Then 
$S \leq T$.  By \cite[Lemma 2.5]{FK} (ii) we have $\mu_s(S)
\leq \mu_s(T)$ for all $s > 0$.  Let $r = \frac{p}{2} \geq \frac{1}{2}$.
By \cite[Lemma 2.5]{FK} (iv) we have $\mu_s(S^r) = \mu_s(S)^r \leq \mu_s(T)^r =
\mu_s(T)^r$ for all $s > 0$.
Thus $$0 = \Vert T \Vert^r_r - \Vert S \Vert_r^r = \int_0^\infty \, (\mu_s(T)^r -
\mu_s(S)^r) \, ds .$$
Thus $\mu_s(T)^r = \mu_s(S)^r$, or $\mu_s(T) = \mu_s(S)$, a.e.  Now $S^{\frac{1}{2}} \leq T^{\frac{1}{2}}$
by \cite[Lemma 2.3]{Schmitt}, and 
$$\tau(T^{\frac{1}{2}}-S^{\frac{1}{2}})  = \int_0^\infty \, (\mu_s(T)^{\frac{1}{2}} - \mu_s(S)^{\frac{1}{2}}) \, ds = 0,$$ so that $S = T$.  
Thus $\Vert (1-a^* a)^{\frac{1}{2}} h \Vert_p^2 = \Vert h^* (1-a^* a) h \Vert_r= 0$, so that 
$(1-a^* a)h =(1-a^* a)^{\frac{1}{2}}[(1-a^* a)^{\frac{1}{2}} h] = 0$ and $h = a^* a h$.  The last statement is obviously true
since $M$ is finite.  \end{proof}

\begin{theorem}\label{Houterp} 
Let $h\in H^p$ be given, where $1 \leq p < \infty$,  and let $P$ be the canonical quotient map from $[hA]_p$ to $[hA]_p/[hA_0]_p$ (if $p =2$ this is the orthogonal projection $P:[hA]_2\to[hA]_2\ominus[hA_0]_2$). Then $h$ will be outer if and only if $\Phi(h)$ is outer in $L^p(\mathcal{D})$ and $\|\Phi(h) \|_p=\|P(h)\|$.
In this case, and if  $p = 2$, then in fact $\Phi_{\vert [hA]_2} = P$.  \end{theorem}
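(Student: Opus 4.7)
The plan is to handle both implications, using the factorization of Lemma \ref{dlem} as the engine for the converse.

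For the forward direction, assume $h$ is outer, so $[hA]_p = H^p$ and $[hA_0]_p = H^p_0$. First, $\Phi(h)$ is outer in $L^p(\mathcal{D})$: picking $a_n \in A$ with $ha_n \to \I$ in $p$-norm and applying the contraction $\Phi$ yields $\Phi(h)\Phi(a_n) \to \I$ with $\Phi(a_n) \in \mathcal{D}$. The quotient norm equals $\|P(h)\| = \inf_{x \in H^p_0} \|h-x\|_p$. Taking $x = h - \Phi(h) \in H^p_0$ gives $\|P(h)\| \leq \|\Phi(h)\|_p$, while contractivity of $\Phi$ together with $\Phi(x)=0$ for every $x \in H^p_0$ gives $\|h-x\|_p \geq \|\Phi(h-x)\|_p = \|\Phi(h)\|_p$, hence the reverse inequality. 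In the case $p=2$, the standard orthogonal decomposition $L^2(M) = L^2(\mathcal{D}) \oplus H^2_0 \oplus (H^2_0)^*$ identifies $[hA]_2 \ominus [hA_0]_2 = H^2 \ominus H^2_0$ with $L^2(\mathcal{D})$ and $P$ with $\Phi|_{H^2}$, which gives the final assertion.

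For the converse, apply Lemma \ref{dlem} to write $h = ug$ with $g \in H^p$ outer and $u \in A$ unitary; the task is to upgrade $u$ to an element of $\mathcal{D}$. Since left multiplication by $u$ is isometric on $L^p(M)$, one has $[hA]_p = uH^p$ and $[hA_0]_p = uH^p_0$, so $\|P(h)\| = \inf_{y\in H^p_0} \|g-y\|_p$, which equals $\|\Phi(g)\|_p$ by the forward computation applied to the outer element $g$. Meanwhile, approximating $g$ by elements of $A$ in $p$-norm and using multiplicativity of $\Phi$ on $A$ together with continuity of $\Phi$ on $L^p(M)$ yields $\Phi(h) = \Phi(u)\Phi(g)$. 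The hypothesis therefore reads $\|\Phi(u)\Phi(g)\|_p = \|\Phi(g)\|_p$ with $\Phi(u) \in \mathcal{D}$ a contraction.

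Since $\Phi(g)$ is outer in $L^p(\mathcal{D})$, repeating the argument from Theorem \ref{isout} shows its left support is $\I$. Lemma \ref{mstk} therefore promotes $\Phi(u)$ to a unitary in $\mathcal{D}$. To deduce $u = \Phi(u)$, write $u = \Phi(u) + u_0$ with $u_0 \in A_0$ and apply $\Phi$ to the operator identity $u^*u = \I$: the $\mathcal{D}$-bimodule property of $\Phi$ combined with $\Phi(u_0) = \Phi(u_0^*) = 0$ gives $\I = \Phi(u)^*\Phi(u) + \Phi(u_0^*u_0) = \I + \Phi(u_0^*u_0)$, so $\Phi(u_0^*u_0) = 0$ and $u_0 = 0$ by faithfulness of $\tau$. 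Hence $u \in \mathcal{D}$ is unitary, which forces $uH^p = H^p$ and $[hA]_p = u[gA]_p = H^p$, making $h$ outer.

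The main obstacle is the transition from the numerical equality $\|\Phi(u)\Phi(g)\|_p = \|\Phi(g)\|_p$ to the unitarity of $\Phi(u)$ in $\mathcal{D}$: Lemma \ref{mstk} is indispensable here, and it only applies because outerness of $\Phi(g)$ in $L^p(\mathcal{D})$ guarantees left support $\I$. The subsequent step showing $u$ itself (and not merely $\Phi(u)$) lies in $\mathcal{D}$ is a short but delicate trace computation relying on the faithfulness of the tracial expectation.
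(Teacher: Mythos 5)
Your proof is correct and takes essentially the same route as the paper's: the forward direction by applying $\Phi$ to an approximating sequence and identifying $\|P(h)\|$ with $\inf_{a_0}\|h+ha_0\|_p=\|\Phi(h)\|_p$, and the converse by factoring $h=ug$ via Lemma \ref{dlem}, deducing $\|\Phi(u)\Phi(g)\|_p=\|\Phi(g)\|_p$, invoking Lemma \ref{mstk} to make $\Phi(u)$ unitary, and then the identity $\I=\Phi(u^*u)=\Phi(u)^*\Phi(u)+\Phi((u-\Phi(u))^*(u-\Phi(u)))$ to force $u\in\mathcal{D}$. There are no substantive differences from the paper's argument.
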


\begin{proof} 
First suppose that $h$ is outer. To see that $\Phi(h)$ is outer, select a sequence $\{a_n\}\subset A$ so that $ha_n \to \I$ in $L^p$-norm. On applying $\Phi$ we see that $\Phi(h)\Phi(a_n)\to \I$, which is sufficient to ensure that $\Phi(h)$ is outer in $L^p(\mathcal{D})$. 
For any $a_0 \in [A_0]_p$, we clearly have that $$\|\Phi(h)\|_p=\inf_{a_0\in A_0}\|h+a_0\|_p.$$(To see this recall that $\Phi$ is 
a contractive projection from $H^p$ onto $L^p(\mathcal{D})$ with kernel $H^p_0$.) Since $h$ is outer we have that $[hA_0]_p = [A_0]_p$ (note that if $h a_n \to 1$ and $b \in A_0$ then $h (a_nb) \to b$, so that $A_0 \subset [hA_0]_p \subset [A_0]_p$, and so
 $[A_0]_p = [hA_0]_p$). If we apply this fact to the above equality, it is clear that in fact
$$\|\Phi(h)\|_p= \inf_{a_0\in A_0}\|h+a_0\|_p =\inf_{a_0\in A_0}\|h+ha_0\|_p=\|P(h)\|.$$  

Conversely suppose that $\Phi(h)$ is outer in $L^p(\mathcal{D})$ and $\|\Phi(h)\|_p=\|P(h) \|_p$.
By Lemma \ref{dlem}, $h=ug$ where $g\in H^p$ is outer and $u\in A$ is a unitary.  
In fact one may prove that $u \in \mathcal{D}$ by a tweak of the argument at the end of the proof of \cite[Proposition 4.8]{BL6}. 
To this end observe that $$\Vert \Phi(h) \Vert_p = \Vert \Phi(u) \Phi(g) \Vert_p \leq \Vert \Phi(g) \Vert_p  = \inf_{a_0\in A_0}\| g + ga_0\|_p=
\Vert P(h) \Vert  = \Vert \Phi(h) \Vert_p,$$
where the middle equality follows from the last paragraph, and the  equality  after that uses the fact that 
$\| g + ga_0\|_p=  \| ug + uga_0\|_p= \| h + ha_0\|_p$.     We conclude that 
 $\Vert \Phi(u) \Phi(g) \Vert_p = \Vert \Phi(g) \Vert_p$, or equivalently by Lemma \ref{mstk} that $\Phi(u)$ is unitary, in 
which case $\Phi((u - \Phi(u))^* (u - \Phi(u)) = 0$. So $u = \Phi(u) \in \mathcal{D}$, and $h$ is outer.
\end{proof}

{\bf Remarks.}  1) \ The last result is a variant of \cite[Proposition 4.8]{BL6},
which says that $h\in H^2$ is outer if and only if $\|\Phi(h) \|_2=\|P(h)\|_2$
and there is a cyclic separating vector for the right action of $\mathcal{D}$ on the 
wandering subspace $[hA]_2 \ominus [hA_0]_2$.  A perusal of the proofs of these 
two results, reveals that \cite[Proposition 4.8]{BL6} is a special case of Theorem \ref{Houterp}.     

2) \ Suppose that $\square : L^p(M)  \to [0,\infty)$ is any function such that
$\square(h) = \|\Phi(h) \|_p$ if $h \in H^2$ is outer, and $\square(f) = \square(|f|)$ 
if $f \in L^p(M)$.
  In the last proof (see also \cite[Proposition 4.8]{BL6}) one sees that $P$ gives rise to
such a function 
(It seems plausible that in for example the case $p = 1$, 
the determinant like quantity
$\tau(\exp(\Phi(\log |a|)))$ is also such 
a function, but we have not thought about this.)   Then $h\in H^p$ is outer if and only if $\Phi(h)$ 
is outer in $L^p(\mathcal{D})$ and $\|\Phi(h) \|_p = \square(h)$.  
The proof is essentially identical to that of Theorem \ref{Houterp}.
Indeed if $h=ug$ where $g\in H^p$ is outer and $u\in A$ is a unitary, 
then $\square(h) = \square(g) = \|\Phi(g)  \|_p$.
If $\square(h) = \|\Phi(h) \|_p = \|\Phi(u) \Phi(g)  \|_p$, 
the proof of Theorem \ref{Houterp} then shows that $h$ is outer.

3) \ In the case $p =2$ there is a  quick proof of Theorem \ref{Houterp}.
  By the hypothesis we may select a sequence $\{a_n\}\subset A_0$ so that $\|h(\I+a_n)\|_2 \to \|\Phi(h)\|_2$ as $n\to\infty$. Then
\begin{eqnarray*}
\|\Phi(h)-h(\I+a_n)\|^2&=&  \tau(|\Phi(h)-h(\I+a_n)|^2)\\
&=& \tau(|\Phi(h)|^2- \Phi(h)^*h(\I+a_n)-(\I+a_n^*)h^*\Phi(h))\\
&& + \tau(|h(1+a_n)|^2)\\
&=& \tau(\Phi(|\Phi(h)|^2- \Phi(h)^*h(\I+a_n)-(\I+a_n^*)h^*\Phi(h)))\\
&& + \tau(|h(1+a_n)|^2)\\
&=& -\tau(|\Phi(h)|^2) + \|h(1+a_n)\|_2^2\\
&\to& 0
\end{eqnarray*}
as $n\to \infty$. It follows that $\Phi(h)-h\in [hA_0]_2$ (or $\Phi(h) \in [hA]_2$), and hence that $h$ is outer by Proposition \ref{innam}. 

\bigskip

The last result proves Theorem \ref{Houterp0}, since the notation 
$\delta^1(h)$ in that result is just $\|P(h)\|$.  We remark that in the light of Theorem \ref{Louter0}
one might expect expressions $\delta^e(h)$ to occur in Theorem \ref{Houterp0} for projections $e \in \mathcal{D}$,
and in fact one can do this but it is unnecessary--following as an automatic consequence
of the $\delta^1(h)$ expression.

\begin{lemma} \label{dlemi2}  If $1 \leq p < \infty$ and $h\in H^p$  then   $\Phi(h)$ is outer in
 $L^p(\mathcal{D})$
if and only if  $h$ is either outer or of the form $h=ug$ where $g\in H^p$ is outer and $u\in A$  is a unitary with $\Phi(u)$ outer and $\Vert \Phi(u) \Vert_p < 1$.  \end{lemma}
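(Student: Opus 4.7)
The plan is to treat this as a refinement of Lemma \ref{dlem}.

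For the forward implication, I would begin by applying Lemma \ref{dlem} to obtain a factorization $h = ug$ with $g \in H^p$ outer and $u \in A$ a unitary. If in this factorization $u$ happens to lie in $\mathcal{D}$, then picking any sequence $a_n \in A$ with $g a_n \to \I$ in $p$-norm and forming $h(a_n u^*) = u g a_n u^* \to u u^* = \I$ (note that $a_n u^* \in A$ since $u^* \in \mathcal{D} \subset A$) establishes that $h$ itself is outer, which is the first alternative. So it remains to consider the case $u \notin \mathcal{D}$, and to show that then $\Phi(u)$ is outer in $L^p(\mathcal{D})$ and $\Vert \Phi(u) \Vert_p < 1$.

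For outerness of $\Phi(u)$, note first that $g$ outer in $H^p$ forces $\Phi(g)$ outer in $L^p(\mathcal{D})$ (apply the contractive map $\Phi$ to any sequence $g a_n \to \I$). Since $\Phi(h) = \Phi(u)\Phi(g)$ is also outer by hypothesis, I can pick $d_n \in \mathcal{D}$ with $\Phi(u)\Phi(g) d_n \to \I$ in $p$-norm. Each $\Phi(g) d_n$ lies in $L^p(\mathcal{D})$, which I would approximate by some $f_n \in \mathcal{D}$ and then use the boundedness of $\Phi(u)$ to obtain $\Phi(u) f_n \to \I$ in $p$-norm with $f_n \in \mathcal{D}$, giving outerness of $\Phi(u)$.

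The main obstacle is establishing the strict inequality $\Vert \Phi(u) \Vert_p < 1$. The Kadison--Schwarz inequality for the conditional expectation $\Phi$ gives $|\Phi(u)|^2 \leq \Phi(u^* u) = \Phi(\I) = \I$, so $\Vert \Phi(u) \Vert_p^p = \tau(|\Phi(u)|^p) \leq \tau(\I) = 1$. Should equality occur, then $\tau(\I - |\Phi(u)|^p) = 0$ combined with $\I - |\Phi(u)|^p \geq 0$ and the faithfulness of $\tau$ forces $|\Phi(u)|^p = \I$, hence $\Phi(u)^* \Phi(u) = \I$. Mimicking the last lines of the proof of Theorem \ref{Houterp} then yields
\[ \Phi((u - \Phi(u))^*(u - \Phi(u))) = \Phi(u^* u) - \Phi(u)^*\Phi(u) = 0, \]
and faithfulness of $\tau$ forces $u = \Phi(u) \in \mathcal{D}$, contradicting the standing assumption.

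For the converse, assume one of the two alternatives. If $h$ is outer, then $ha_n \to \I$ yields $\Phi(h)\Phi(a_n) = \Phi(ha_n) \to \I$, so $\Phi(h)$ is outer. If instead $h = ug$ as in the second alternative, then $\Phi(h) = \Phi(u)\Phi(g)$, where $\Phi(u) \in \mathcal{D}$ is outer by hypothesis and $\Phi(g) \in L^p(\mathcal{D})$ is outer as above. Viewing $\mathcal{D}$ as a subdiagonal algebra of itself and applying Proposition \ref{onetwo} with $q = \infty$ and $r = p$ produces outerness of $\Phi(h)$ in $L^p(\mathcal{D})$. The strict inequality $\Vert \Phi(u) \Vert_p < 1$ is not actually invoked here; its role is merely to keep the second alternative disjoint from the first.
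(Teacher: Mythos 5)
Your proof is correct, and its skeleton is the same as the paper's: factor $h=ug$ via Lemma \ref{dlem}, then split into cases according to whether $u$ lies in $\mathcal{D}$. (The paper's dichotomy is ``$u$ outer'' versus ``$u$ not outer'', which for a unitary $u\in A$ is equivalent to yours, since $u$ outer forces $u^*\in [A]_p\cap M=A$, i.e.\ $u\in\mathcal{D}$, and conversely.) The genuine divergence is in how the two supporting steps are justified. For the key inequality $\Vert\Phi(u)\Vert_p<1$ the paper simply quotes Theorem \ref{Houterp}: since $\Vert P(u)\Vert=\inf_{a_0\in A_0}\Vert 1+a_0\Vert_p=1$ for a unitary, an outer $\Phi(u)$ with $u$ not outer forces $\Vert\Phi(u)\Vert_p\neq 1$, hence $<1$. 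You instead prove the contrapositive directly from the Kadison--Schwarz inequality, faithfulness of $\tau$, and the identity $\Phi\bigl((u-\Phi(u))^*(u-\Phi(u))\bigr)=\Phi(u^*u)-\Phi(u)^*\Phi(u)$; this is essentially the computation buried in Lemma \ref{mstk} and the end of the proof of Theorem \ref{Houterp}, so your route is more self-contained at the cost of redoing a little work, and it is sound. For the converse the paper argues directly that $[\Phi(u)\Phi(g)\mathcal{D}]_p=[\Phi(u)\mathcal{D}]_p$, whereas you invoke Proposition \ref{onetwo} with $q=\infty$; that is fine, though it tacitly uses that $\Phi(u)$ outer in $L^p(\mathcal{D})$ makes $\Phi(u)$ weak*-outer in $\mathcal{D}=H^\infty(\mathcal{D})$ (covered by the cited {\rm [BL6, Lemma 4.1]}), a step the paper's two-line approximation argument avoids. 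No gaps.
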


\begin{proof}  
 If $h=ug$ is as described, then $[\Phi(h) \mathcal{D}]_p = [\Phi(u) \Phi(g) 
\mathcal{D}]_p= [\Phi(u) \mathcal{D}]_p$,
since any $d \in \mathcal{D}$, and in particular $d = 1$, may be approximated by a sequence
$\Phi(g)d_n$ with $d_n \in \mathcal{D}$, and then $\Phi(u) \Phi(g)d_n \to \Phi(u) d$.  So 
$\Phi(h)$ is outer in $L^p(\mathcal{D})$ if and only if $\Phi(u)$ is outer there.
If $u$ itself is 
outer, then it is easy to see that $u \in \mathcal{D}$
(since $1 \in u [A]_p$ implies that $u^* \in [A]_p \cap M = A$).  Hence $ug = h$
is then outer in $H^p$.  If $u$ is not 
outer, then by Theorem \ref{Houterp} we must have
$\Vert \Phi(u) \Vert_p \neq 1$, and so $\Vert \Phi(u) \Vert_p < 1$.  \end{proof}

We remark that in \cite{BL3}  it was shown (see the lines before the Closing Remark of that paper) that if $f\in L^p(M)$ ($1 \leq p \leq \infty$), then $f$ is of the form $f=uh$ for some outer $h\in H^p$ and a unitary $u\in M$ whenever the right-wandering subspace of $[fA]_p$ (respectively right-wandering quotient of $[fA]_p$) has a nonzero separating and cyclic vector for the right action of $\mathcal{D}$.  Thus condition (3) below implies the validity of (1) for any $1\leq p\leq \infty$.   We now sharpen this statement: 

\begin{theorem}\label{Louter} Let $f\in L^p(M)$ ($1\leq p\leq \infty$).  The following are equivalent:
\begin{enumerate}
\item $f$ is of the form $f=uh$ for some outer $h\in H^p$ and a unitary $u\in M$;
\item the map $\mathcal{D} \to [fA]_p/[fA_0]_p: d\to P(fd)$ is injective, where $P$ is the quotient map $P:[fA]_p\to[fA]_p/[fA_0]_p$ (If $p=2$, $P$ may be taken to be the orthogonal projection $P:[fA]_2\to[fA]_2\ominus[fA_0]_2$.)
\item the right-wandering quotient of $[fA]_p$ (respectively right-wandering subspace of $[fA]_2$ if $p=2$) has a nonzero separating and cyclic vector for the right action of $\mathcal{D}$.
\item $fe \notin [fA_0]_p$ for every  nonzero projection $e$ in $\mathcal{D}$.
\end{enumerate}
\end{theorem}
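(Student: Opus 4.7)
The plan is to prove the four conditions equivalent via the cycle $(1) \Rightarrow (4) \Rightarrow (2) \Rightarrow (3) \Rightarrow (1)$, with the last implication being exactly the result from \cite{BL3} recorded in the paragraph immediately preceding the theorem statement.

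For $(1) \Rightarrow (4)$, write $f = uh$ with $u \in M$ unitary and $h \in H^p$ outer. Left multiplication by $u$ is an $L^p$-isometry, so $[fA]_p = u[hA]_p = uH^p$ and $[fA_0]_p = u[hA_0]_p$; the outerness of $h$ forces $[hA_0]_p = H^p_0$ (exactly as in the proof of Theorem \ref{Houterp}). If $fe \in [fA_0]_p$ for a projection $e \in \mathcal{D}$, then $he \in H^p_0$, hence $\Phi(h)e = \Phi(he) = 0$; since outerness of $h$ implies $\Phi(h)$ is outer in $L^p(\mathcal{D})$, which by the discussion in the introduction is equivalent to $|\Phi(h)|$ being invertible as an unbounded operator (so in particular injective), we conclude $e = 0$. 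The direction $(2) \Rightarrow (3)$ is mostly formal: the candidate cyclic separating vector is $P(f)$ itself. It is separating by (2); it is nonzero because $P(f) = P(f \cdot \I)$ and the map of (2) applied to $d = \I \neq 0$ must be nonzero; and for cyclicity, given $P(g)$ with $g = \lim_n fa_n$ ($a_n \in A$), the decomposition $a_n = \Phi(a_n) + (a_n - \Phi(a_n))$ with $a_n - \Phi(a_n) \in A_0$ yields $P(fa_n) = P(f\Phi(a_n)) = P(f) \cdot \Phi(a_n) \in P(f)\mathcal{D}$, and continuity of $P$ completes the approximation. The case $p=2$ proceeds identically with the wandering subspace in place of the quotient.

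The crux is $(4) \Rightarrow (2)$. Suppose $P(fd) = 0$ for some $d \in \mathcal{D}$, i.e., $fd \in [fA_0]_p$. Since $A_0$ is a two-sided ideal of $A$ and $\mathcal{D} \subset A$, the space $fA_0$ is stable under right multiplication by $\mathcal{D}$, and by continuity of such multiplication on $L^p(M)$ so is its closure $[fA_0]_p$. Right-multiplying by $d^*$ gives $fdd^* \in [fA_0]_p$. For each $n$, set $e_n = \chi_{(1/n,\infty)}(dd^*) \in \mathcal{D}$, and let $g_n$ be the bounded Borel function equal to $1/t$ on $(1/n,\infty)$ and $0$ elsewhere, so that $e_n = dd^* \, g_n(dd^*)$; then
\[
fe_n \;=\; (fdd^*)\, g_n(dd^*) \;\in\; [fA_0]_p\cdot\mathcal{D} \;\subset\; [fA_0]_p.
\]
If $d \neq 0$ then $dd^*\neq 0$, so the increasing spectral projections $e_n$ are nonzero for all sufficiently large $n$, contradicting (4). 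Hence $d = 0$, establishing the injectivity in (2).

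The only step I anticipate needing real care is $(4) \Rightarrow (2)$: the mechanism for passing from the projection-based condition (4) to the full module statement (2) rests on the twin facts that $A_0$ is a two-sided ideal of $A$ (which gives the right $\mathcal{D}$-module structure of $[fA_0]_p$) and that bounded Borel functions of a positive element of $\mathcal{D}$ remain inside the von Neumann algebra $\mathcal{D}$. The remaining implications are essentially bookkeeping around the definition of outerness of $\Phi(h)$ and the cyclic vector argument, together with the already-cited $(3) \Rightarrow (1)$ from \cite{BL3}.
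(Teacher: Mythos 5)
Your proof is correct, but it closes the equivalence by a genuinely different route than the paper does. The paper handles $(3)\Rightarrow(1)$ by the citation to \cite{BL3}, dispatches $(2)\Rightarrow(4)$ and $(1)\Rightarrow(2),(3)$ quickly, and then devotes essentially all of its effort to a \emph{direct} proof of $(4)\Rightarrow(1)$: for $p=2$ it takes the orthogonal projection $v$ of $f$ onto $[fA_0]_2$, invokes \cite[Lemma 4.9]{BL6} to get $|f-v|\in L^2(\mathcal{D})$, uses (4) to see that $f-v$ has full support, extracts a unitary $u$ from its polar decomposition with $u^*f\in H^2$ and $\Phi(u^*f)=|f-v|$ outer, and finishes with Lemma \ref{dlem}; the general $p$ case is then reduced to $p=2$ by passing to $|f|^{1/2}$ and to $kf$ for a suitable bounded $k$ with $\Delta(k)>0$. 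You instead run the cycle $(1)\Rightarrow(4)\Rightarrow(2)\Rightarrow(3)\Rightarrow(1)$, and the only step with real content is your $(4)\Rightarrow(2)$: from $fd\in[fA_0]_p$ you deduce $fdd^*\in[fA_0]_p$ and then $fe_n=(fdd^*)g_n(dd^*)\in[fA_0]_p$ for the spectral projections $e_n=\chi_{(1/n,\infty)}(dd^*)$, which are nonzero for large $n$ unless $d=0$. This is sound: right multiplication by a fixed element of $M$ is $p$-norm (resp.\ weak*) continuous, $A_0\mathcal{D}\subset A_0$ since $A_0$ is an ideal, and $g_n(dd^*)\in\mathcal{D}$ by the Borel functional calculus; likewise $(2)\Rightarrow(3)$ with the vector $P(f)$ is genuinely formal because $P(fa)=P(f\Phi(a))=P(f)\Phi(a)$. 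The trade-off is this: your route is far shorter and isolates a clean self-improvement of the projection test (4) to injectivity on all of $\mathcal{D}$, but it outsources the entire analytic content to the factorization $(3)\Rightarrow(1)$ from \cite{BL3}; the paper's direct proof of $(4)\Rightarrow(1)$ is constructive, yields the explicit identification $\Phi(u^*f)=|f-v|$, and develops the $L^p$-to-$L^2$ reduction that is reused elsewhere. Since the paper itself asserts $(3)\Rightarrow(1)$ for all $1\le p\le\infty$ in the paragraph preceding the theorem, your reliance on it is legitimate, and your argument stands as a valid shorter alternative.
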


\begin{proof} As was noted in the discussion preceding the Theorem, (3) implies (1).  Clearly (2) implies (4).

Suppose that (1) holds, that is let $f$ be of the form $f=uh$ for some outer 
$h\in H^p$ and a unitary $u\in M$.   Then $[fA]_p = u [hA]_p = u H^p$ and $[fA_0]_p = u [h A_0]_p
= u [A_0]_p$ (see the proof of Theorem \ref{Houterp}).
Thus the right-wandering quotient
$[fA]_p/[fA_0]_p
= (u [A]_p)/(u[A_0]_p) = u L^p(\mathcal{D})$, which ensures the validity of (2) and (3).
Thus (1)  implies both (2) and (3). 

It remains to prove that (4) implies (1). So  assume that (4) holds.

We first deal with the case $p=2$. Setting $e=\I$, we clearly have that $f\notin[fA_0]_2$. With $v$ denoting the orthogonal projection of 
$f$ onto $[fA_0]_2$, it then follows from \cite[Lemma 4.9]{BL6} that $|f-v|\in 
L^2(\mathcal{D})$. Since $v \in [fA_0]_2$, we clearly have that $va\in [fA_0]_2$ for any $a\in A$. 
Hence
\begin{equation}\label{eq:aa}(f-v)d +[fA_0]_2 = fd + [fA_0]_2 , \qquad d \in A .\end{equation}
Let  $e$ be a nonzero projection in $\mathcal{D}$.  By hypothesis 
$fe \notin [fA_0]_2$, so that $(f-v)e \neq 0$ by (\ref{eq:aa}). This implies that the support projection
$\mathrm{supp}(|f-v|) = \I$ (otherwise setting $e=\I-\mathrm{supp}(|f-v|)$ would contradict the statement just made). With $u$ denoting the partial isometry in the polar decomposition of $(f-v)$, this means that $\I=u^*u$. Since $M$ is finite, this can only be the case if $u$ is in fact a unitary. 

We proceed to show that $\tau(u^*fa_0)=0$ for all $a_0\in A_0$.
 To see this, let $e_n\in\mathcal{D}$ be the
spectral projection of $|f-v|$ corresponding to the interval
$[0,1/n]$.  By the Borel functional calculus, 
we firstly see that for each $n$ there exists $r_n \in {\mathcal D}$ so that $1-e_n = |f-v| r_n$.  
Secondly, $e_n$ converges strongly to $\I-\mathrm{supp}(|f-v|)=0$.
Given $a_0\in A_0$, this in turn ensures that
\begin{eqnarray*}
\tau(a_0^*f^*u)&=& \tau(a_0^* f^* ue_n) (\I) + \tau(a_0^* f^* u (1-e_n))\\
&=& \tau(a_0^* f^* ue_n) + \tau(a_0^* f^* (f-v) r_n) \\
&=& \tau(a_0^* f^* ue_n) \to 0.
\end{eqnarray*}
(In the last equality above, we made use of the fact that $fa_0r_n^* \in [fA_0]_2$ with $f-v$ orthogonal to
$[fA_0]_2$.)
As required it therefore follows that $$\tau(u^*fa_0)=0\quad\mbox{for all}\quad a_0\in A_0.$$
We  make two deductions from this fact. First,  $u^* f \in L^2(M) \ominus [A_0^*]_2 = H^2$. (So on setting $h=u^*f$, $f$ will be of the form $f=uh$ for a unitary $u\in M$ and some $h\in H^2$.) Second, since $v$ is a limit of terms of the form $fa_0$ (where $a_0\in A_0$), it will also follow from this fact that $\tau(u^*vd)=0$ for any $d \in \mathcal{D}$.
 For any $d\in \mathcal{D}$, we will then have that
\begin{eqnarray*}
\tau(\Phi(h)d) &=& \tau(\Phi(hd))\\
&=& \tau(hd)=\tau(u^*fd)\\
&=& \tau(u^*(f-v)d) = \tau(|f-v| d).
\end{eqnarray*}
Since $d\in \mathcal{D}$ was arbitrary and since $|f-v|\in L^2(\mathcal{D})$, we must have $\Phi(h)=|f-v|$. Notice that
$$[\Phi(h)\mathcal{D}]_2=[|f-v|\mathcal{D}]_2= [\mathrm{supp}(|f-v|)\mathcal{D}]_2=L^2(\mathcal{D}).$$Hence $\Phi(h)$ is outer in 
$L^2(\mathcal{D})$.  So by Lemma \ref{dlem},
$h = v k$, and $f = (uv) k$, 
for a unitary $v$ ($u$ as before),  and outer $k \in H^2$. Thus the implication is valid for the case $p=2$.  

Next let $f\in L^p$ where $1\leq p \leq \infty$. We first show that $f$ is then injective with dense range. This in turn will then enable us to factorise $f$ in a way which will enable us to apply the result for $p=2$ to obtain the desired conclusion. Let $f=w|f|$ be the polar decomposition of $f$. It is an exercise to see that for any projection $e$, $fe= (w|f|^{1/2})|f|^{1/2}e = fe \notin (w|f|^{1/2})[|f|^{1/2}A_0]_{2p} \subset [fA_0]_p$ whenever $|f|^{1/2}e \notin [|f|^{1/2}A_0]_{2p}$. Hence the validity of (4) for $f\in L^p(M)$, ensures that (4) also holds for $|f|^{1/2}$ considered as an element of 
$L^{2p}(M)$. Given a projection $e\in M$, an application of \cite[Lemma 4.2]{BL3} ensures that  $|f|^{1/2}e \in [|f|^{1/2}A_0]_{2p}$ if 
and only if $|f|^{1/2}e \in [|f|^{1/2}A_0]_{2}$. Thus (4) holds for $|f|^{1/2}$ considered as an element of $L^2(M)$. By what we have 
already proved, $|f|^{1/2}$ must then be of the form $|f|^{1/2} = ug$ for some unitary $u\in M$ and an outer $g\in H^2(M)$. But then $|f|^{1/2}$ must be injective with dense range by \cite[Lemma 4.2]{BL6}. Since $M$ is finite and the projections $\mathrm{supp}(|f|^{1/2})$ and $\mathrm{supp}(|f^*|^{1/2})$ equivalent, we also have that $\mathrm{supp}(|f^*|^{1/2})=\I$

We show that there exists a positive element $k$ of $M$ for which $k^{-1}$ is itself an element of $L^{2p}(M)$, $kf\in L^{2p}(M)$, and $\Delta(k^{-1}) > 0$. The existence of such a $k$ can be verified by modifying aspects of the proof of Theorem \ref{fak}. We outline the main features of this construction, and refer the reader to the proof of Theorem \ref{fak} for technical details. Let $b(t) = 1/t$ if $t > 1$, and  $b(t) = 1$ if $0 \leq t \leq 1$. (This corresponds to $b_1$ in the proof of Theorem \ref{fak}.) The multiplicative inverse of the function $b$ defined above is nothing but $\frac{1}{b}(t) = t$ if $t > 1$, and  $\frac{1}{b}(t) = 1$ if $0 \leq t \leq 1$. For this function we have $\frac{1}{b}(t) 
\leq 1+t$.

Now consider the operators $k = b(|f^*|^{1/2})\in M$ and $\frac{1}{b}(|f^*|^{1/2})$. By the Borel functional calculus we then have that $$k\frac{1}{b}(|f^*|^{1/2})=\frac{1}{b}(|f^*|^{1/2})k=\mathrm{supp}(|f^*|^{1/2})=\I.$$Hence the affiliated operator $\frac{1}{b}(|f^*|^{1/2})$ is the inverse of $k$. Since $\frac{1}{b}(t) \leq 1+t$, the Borel functional calculus also ensures that $$k^{-1} = \frac{1}{b}(|f^*|^{1/2})\leq \I+|f^*|^{1/2}.$$In view of the fact that $f\in L^{p}(M)$ ensures that $|f^*|^{1/2}\in L^{2p}(M)$, it follows that $k^{-1}\in L^{2p}(M)$.

As we saw in the proof of Theorem \ref{fak}, for an operator $k$ thus constructed, we have that $\|k|f^*|^{1/2}\|_\infty \leq 1$ and that $\Delta(k) > 0$. But then also $\Delta(k^{-1})=\Delta(k)^{-1}>0$.

Recall that $f=w|f|$. Then also $f^*=w^*|f^*|$. Since by construction $k|f^*|^{1/2}\in M$, it follows that $kf = (k|f^*|^{1/2})|f^*|^{1/2}w\in L^{2p}(M)$.

Let $e\in M$ be a projection. It is then not difficult to see that if $kfe \in [kfA_0]_{2p}$, then $fe=k^{-1}kfe \in k^{-1}[kfA_0]_{2p}\subset [fA_0]_{p}$. Thus the validity of (4) for $f\in L^p(M)$, ensures its validity for $kf\in L^{2p}(M)$. 
Then (4) also holds for $kf$ considered as an element of $L^2(M)$, by \cite[Lemma 4.2]{BL3}.
 From what we have already proved, $kf$ must then be of the form $kf = u_0g_0$ for some unitary $u_0\in M$ and an outer $g_0\in H^2(M)$. Since then $|kf| = |g_0|\in L^{2p}(M)$, $g_0$ is in fact in $H^{2p}(M)$. Notice that we then have $f = k^{-1}u_0g_0$ with $\Delta(k^{-1}u_0) = \Delta(k^{-1})\Delta(u_0)= \Delta(k^{-1})>0$. Thus by the noncommutative Riesz-Szeg\"o theorem (see \cite[Corollary 4.14]{BL6}), there exists a unitary $u_1\in M$ and a strongly outer element 
$g_1\in H^{2p}(M)$ with $k^{-1}u_0=u_1g_1$. But then $f = u_1g_1g_0$. Since $g_1g_0$ is an outer element of $H^p$, it follows that $f$ is of the form described in (1).
\end{proof}

To deduce Theorem \ref{Louter0}  from the last theorem, note that (iv) of that result holds 
if and only if
$\inf \{ \Vert f e - f a_0 \Vert_p  : a_0 \in A_0 \} > 0$ for each projection $e \in \mathcal{D}$.  However by orthogonality it is easy to argue that 
$$\Vert f e - f a_0 \Vert_p = \Vert (f  - f a_0 )e + f a_0 e^{\perp}  \Vert_p  \geq  \Vert (f  - f a_0 )e \Vert_p  = 
\tau(e)^p \,  \Vert f - f a_0 \Vert^e_p.$$  Since $A_0 \, e \subset A_0$, we see that 
the last infimum is a strictly positive multiple of $\delta^e(f) > 0$.  Thus (iv) of Theorem \ref{Louter} is equivalent to 
$\delta(f) > 0$.  If $f \in H^p$ then $u = u \cdot 1 \in u [h A]_p = [fA]_p \subset H^p$, so that $u \in H^p \cap M = A$. 
If $f\in L^p(M)_+$ then these conditions imply that $f = u h = |f| = (h^* u^* u h)^{\frac{1}{2}} = |h|$.
Conversely if $f = |h|$ for outer $h$ then $\delta(f) = \delta(|h|)= \delta(h)$ since $|hx| = ||h|x|$ for any $x$,
and $\delta(h)$ is strictly positive by the implication already proved.  
This completes the proof of  Theorem \ref{Louter0}.

\end{document}